\theoremstyle{plain}
\newtheorem{thm}{Theorem}[section]
\newtheorem{prop}[thm]{Proposition}
\newtheorem{cor}[thm]{Corollary} 
\theoremstyle{definition}
\theoremstyle{remark}
\newtheorem{remark}[thm]{Remark}
\numberwithin{equation}{section}
\numberwithin{equation}{section}
\renewcommand{\Re}{\mathop{\rm Re}}
\renewcommand{\Im}{\mathop{\rm Im}}
\DeclareMathOperator{\supp}{supp}
\DeclareMathOperator{\conv}{conv}
\newcommand{\N}{\mathbb{N}}
\newcommand{\C}{\mathbb{C}}
\newcommand{\D}{\mathbb{D}}
\newcommand{\R}{\mathbb{R}}
\newcommand{\isdef}{\overset{\mathrm{def}}{=\joinrel=}}
\title[Flow of the zeros of derivatives]{Flow of the zeros of polynomials under iterated differentiation }
\author[A.~Mart\'{\i}nez-Finkelshtein]{Andrei Mart\'{\i}nez-Finkelshtein}
\address[AMF]{Department of Mathematics, Baylor University, Waco, TX 76706, USA, and Department of Mathematics, University of Almer\'{\i}a, Almer\'{\i}a, Spain}
\email{A\_Martinez-Finkelshtein@baylor.edu}
\author[E.~A.~Rakhmanov]{Evguenii~A.~Rakhmanov}
\address[ER]{Department of Mathematics, University of South Florida, Tampa, FL 33620, USA}
\email{rakhmano@usf.edu}
\date{\today}
\keywords{Polynomials; Zeros; Empirical distribution of zeros; Weak convergence; inviscid Burgers equation; Free probability  }
\subjclass[2020]{Primary:  30C15; Secondary: 30C10; 37F10; 46L54; 76Bxx}
\begin{document}

\begin{abstract}
For a monic polynomial $Q_n$ of degree $n$, let $Q_{n, k}$ be its $k$-th derivative normalized to be monic. Under the only assumption that the sequence $\{Q_n\}$ has a weak* limiting zero distribution (an empirical distribution of zeros) represented by a probability measure $\mu_0$ with compact support in the complex plane, we show that as $n, k \rightarrow \infty$ such that $k / n \rightarrow t \in(0,1)$, the Cauchy transform of the normalized zero-counting measure of the polynomials $Q_{n, k}$ converges in a neighborhood of infinity to an analytic function, uniquely determined by $\mu_0$ and $t$, that can be written as the Cauchy transform of a measure $\mu_t$, not necessarily uniquely determined unless $\mu_0$ is supported on the real line.

The family of these Cauchy transforms and, when well defined, the corresponding measures $\mu_t $, $t \in(0,1)$, whose dependence on the parameter $t$ can be interpreted as a flow of the zeros under iterated differentiation, has several interesting connections with the inviscid Burgers equation, the fractional free convolution of $\mu_0$, or a nonlocal diffusion equation governing the density of $\mu_t$ on $\R$. 

We provide an elementary and unified approach that not only recovers, but also explains various phenomena observed in prior works -- from Burgers-type PDEs to free probability limits.
\end{abstract}

\maketitle

\section{Introduction} \label{sec:intro}

In recent years, the problem of repeated differentiation of polynomials has been intensively studied from different points of view and in different connections. In this Introduction, we start by formulating our main result. After that, we make a few remarks about the classical roots of the problem and briefly comment on some recent results.

Hereafter, the asymptotic zero distribution of a sequence of polynomials (also known as its empirical distribution of zeros) is understood in terms of the weak-* limits of their zero-counting measures. We define the zero-counting measure  of a polynomial $P$ as 
\begin{equation} \label{eq:countingmeasure}
	\chi(P)\isdef \sum_{P(x)=0} \delta_x,
\end{equation}
where the sum is taken over all zeros of $P$ with account of multiplicities, and $\delta_x$ is the Dirac measure (unit mass) supported at $x$. This way,  $\chi(P)$ is independent of  normalization of $P$, and  
$$
\chi(P)(\mathbb{C})=\deg(P).
$$ 
We are interested in the following problem: given a sequence of (monic) polynomials $Q_n$, $\deg Q_n=n$, $n\in \N$, and the triangular table of their iterated derivatives 
\begin{equation}
    \label{def:QN}
Q_{n,k}(x) \isdef  \frac{(n-k)!}{n!}\, \frac{d^k}{dx^k } Q_n(x) = x^{n-k}+\text{lower degree terms}, \quad k=0, 1, \dots, n-1,
\end{equation}
what are the possible limit distributions of the normalized zero-counting measures
\begin{equation} \label{eq:defSigmaNK}
\sigma_{n,k} \isdef  \frac{1}{ n} \chi\left(Q_{n, k }\right)  
\end{equation}
as $n\to \infty$ in such a way that $k/n\to t\in [0,1)$, if we only know that $\sigma_{n,0}$ have a weak-* limit, compactly supported on $\C$?

A weaker form of this question deals with the convergence of the logarithmic derivatives
\begin{equation} \label{eqCauchyTransf00}
v_{n,k} (z)\isdef	 \frac{  Q'_{n,k}(z)}{n\, Q_{n,k}(z)}  .
\end{equation}
The main result of this paper is the following:
\begin{thm} \label{mainthm0}
 Let $Q_n$ be a sequence of polynomials whose zeros belong to a compact convex set $S$.  Assume also that the limit 
 $$
 \lim_n  \frac{  Q'_{n}(z)}{n\, Q_{n}(z)} = u(z)
 $$
exists uniformly on compact subsets of $\Omega= \C\setminus S$. 

Then for any $t \in[0,1)$ and any sequence $k_n$ of natural numbers with $k_n / n \rightarrow t$ we have that
	\begin{equation}
	    \label{limitLogDervs}
	\lim_n  v_{n,k} (z)=\lim_n 	 \frac{  Q'_{n,k}(z)}{n\, Q_{n,k}(z)} =u(z,t)
	\end{equation}
uniformly on compact subsets of $\Omega$, where $u(z,t)$ is uniquely determined by the equation 
\begin{equation}
    \label{inviscidBurgers}
\frac{\partial u}{\partial t}(z, t)=\frac{1}{u(z, t)} \frac{\partial u}{\partial z}(z, t)
\end{equation}
with the initial condition $u(z,0)=u(z)$. 
\end{thm}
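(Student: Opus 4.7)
The plan is to reduce the theorem to a well-posed initial value problem for \eqref{inviscidBurgers} via an exact recurrence in $k$. Because $Q_{n,k+1}=Q_{n,k}'/(n-k)$ by the monic normalization, differentiating the logarithmic derivative $Q_{n,k}'/Q_{n,k}=nv_{n,k}$ and simplifying produces the identity
\[
v_{n,k+1}(z)-v_{n,k}(z)=\frac{1}{n}\,\frac{v_{n,k}'(z)}{v_{n,k}(z)},
\]
valid wherever $v_{n,k}(z)\neq 0$. This is precisely a forward-Euler step of size $1/n$ for \eqref{inviscidBurgers}, so the rest of the argument consists in showing that the discrete scheme converges to the continuous flow and that the limit is uniquely determined by $u$.

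\emph{A priori bounds and normality.} By Gauss--Lucas the zeros of every $Q_{n,k}$ remain in $\overline\D_r$, giving the partial-fraction representation $v_{n,k}(z)=n^{-1}\sum_j(z-\zeta_j^{(k)})^{-1}$ and the bound $|v_{n,k}(z)|\le 1/(|z|-r)$ on $\Omega_r$; hence $\{v_{n,k}\}$ is a normal family there and Cauchy's formula yields matching estimates for $v_{n,k}'$. From the Laurent expansion $v_{n,k}(z)=(1-k/n)z^{-1}+O(z^{-2})$ at infinity (uniform in $k$, since its coefficients are moments of measures supported in $\overline\D_r$), one deduces that for every $t\in[0,1)$ there exist $R=R(t)>r$ and $c=c(t)>0$ such that $|v_{n,k}(z)|\ge c$ on each compact subset of $\Omega_R$ whenever $k/n$ is close to $t$ and $n$ is large. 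This is the lower bound that makes the nonlinear quotient in the recurrence well-behaved.

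\emph{Compactness and identification of the limit.} Fix $k_n/n\to t$ and set $V_n(z,\tau):=v_{n,\lfloor\tau n\rfloor}(z)$. Equicontinuity in $z$ comes from the Cauchy-formula derivative bound, and the recurrence together with the lower bound on $|v_{n,k}|$ gives $|V_n(z,\tau')-V_n(z,\tau)|\le C|\tau'-\tau|+O(1/n)$ on compacts of $\Omega_R$, hence equicontinuity in $\tau$. Arzel\`a--Ascoli extracts a subsequence $V_{n_j}\to V$ uniformly on compacts of $\Omega_R\times[0,t+\varepsilon]$. Telescoping the recurrence from $k=0$ to $k=\lfloor\tau n\rfloor$ yields a Riemann sum $n^{-1}\sum_j v_{n,j}'/v_{n,j}$, which converges (using uniform convergence of $V_{n_j}$ together with its $z$-derivative) to $\int_0^\tau V_z(z,s)/V(z,s)\,ds$. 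This gives the integral equation $V(z,\tau)=u(z)+\int_0^\tau V_z/V\,ds$ and hence \eqref{inviscidBurgers} with $V(\cdot,0)=u$. Uniqueness of the analytic solution near infinity is then immediate: writing $V(z,\tau)=\sum_{m\ge 0}a_m(\tau)z^{-m-1}$ turns the PDE into a triangular system of ODEs starting from $a_0'=-1$, $a_0(0)=1$, that determines each $a_m(\tau)$ from $a_m(0)$; equivalently, the method of characteristics yields the implicit formula $V(z,\tau)=u(z+\tau/V(z,\tau))$. Consequently $V=u(\cdot,t)$ on $\Omega_R$, and normality of $\{v_{n,k_n}\}$ on the larger domain $\Omega_r$ together with analytic continuation forces every subsequential limit to agree with $u(\cdot,t)$ on all of $\Omega_r$, so the full sequence converges.

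\emph{Main obstacle.} The crux is the division by $v_{n,k}$: a priori $v_{n,k}$ may vanish somewhere in $\Omega_r$, so the recurrence cannot be iterated globally on $\Omega_r$. The remedy is to run the analysis on $\Omega_R$ for sufficiently large $R$, where the dominant $(1-t)/z$ keeps $v_{n,k}$ away from zero, and then transfer the conclusion back to $\Omega_r$ by analytic continuation using the global normal-family bound.
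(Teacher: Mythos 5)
Your proposal is correct and reaches the conclusion by a genuinely different route from the paper. Both arguments start from the same exact recurrence $v_{n,k+1}-v_{n,k}=\tfrac{1}{n}\,v_{n,k}'/v_{n,k}$ (equation \eqref{eqCauchyTransfNew}), and both finish by transferring the result from a neighborhood of infinity back to $\Omega_r$ via normal families and analytic continuation. The difference is the middle. The paper passes to the functional inverses: since the exact solution satisfies the linear relation \eqref{charact2}, the inverses obey $v_{n,k+1}^{-1}(\zeta)-v_{n,k}^{-1}(\zeta)=-1/(n\zeta)+O(1/n^{2})$ (Proposition~\ref{lem5}), which telescopes to an $O(1/n)$ bound on $u_{n,k}^{-1}-v_{n,k}^{-1}$ and is converted back to the direct functions by the quantitative lemmas of Appendix~\ref{appendix}; this yields an explicit rate of convergence. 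You instead run a consistency--compactness--uniqueness scheme directly on $v_{n,k}$: Arzel\`a--Ascoli applied to $V_n(z,\tau)=v_{n,\lfloor\tau n\rfloor}(z)$, identification of every subsequential limit through the integral equation obtained from the telescoped Riemann sum, and uniqueness of the analytic solution near infinity via the triangular ODE system for the Laurent coefficients --- which is exactly the moment system \eqref{mn} that the paper uses for part \textit{(iii)} of Theorem~\ref{mainthm1}. Your route avoids the inverse-function machinery and the Appendix entirely, and it produces existence of the solution as a by-product of compactness (where the paper uses Proposition~\ref{prop:solutionPDE} and the method of characteristics), at the price of losing the $O(1/n)$ rate and of having to carry the uniqueness argument inside the convergence proof. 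Two points should be stated more carefully: (a) the lower bound $|v_{n,k}|\ge c$ on compacts of $\Omega_R$ must hold uniformly for \emph{all} $k$ with $k/n\le T<1$, not merely for $k/n$ near $t$, since the telescoping uses every intermediate step; this is in fact true because $\Re\bigl(z\,v_{n,k}(z)\bigr)\ge \frac{n-k}{n}\cdot\frac{1-r/R}{(1+r/R)^{2}}\ge (1-T)\cdot\frac{1-r/R}{(1+r/R)^{2}}$ for $|z|\ge R>2r$, as in the proof of Proposition~\ref{prob:boundsCauchy}, so only your phrasing needs adjusting; and (b) the functions $V_n(\cdot,\tau)$ are only piecewise constant in $\tau$, so Arzel\`a--Ascoli must be invoked in its asymptotic-equicontinuity form (modulus of continuity $C|\tau'-\tau|+O(1/n)$), which is standard but worth saying.
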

Equation \eqref{inviscidBurgers} is a special case of a nonlinear wave equation, known as the inviscid Burgers or Hopf equation.\footnote{\, It goes by various other names such as dispersionless Burgers' equation,  dispersionless KdV (Korteweg-de Vries) equation, inviscid KdV, see, e.g.~\cite[Section 2.1.3]{Arendt2023}, \cite[Section 3.4]{MR1625845} or \cite[Ch. 9]{MR2309862}.} It is traditionally considered for the real values of the variables $z$ and $t$, but it has also been studied on the complex plane, both in its viscous and inviscid form; see, for example, \cite{MR3573689, kabluchko2023fractional, kabluchko2023heat, MR3773856, MR4259446, VANDENHEUVEL2023133686}. 

The Hopf equation is known for a remarkable phenomenon: its solutions can develop singularities (or shocks), even when posed on the real line. However, this behavior does not occur when the initial condition $u(z,0)$ is given by the Cauchy transform of a unit \textit{positive} measure supported on a bounded convex set $S$. In this case, the solution $u(z,t)$ to the equation \eqref{inviscidBurgers} remains analytic in the domain $\Omega = \C \setminus S$.
	
Moreover, for any fixed $t \in (0,1)$, the solution $u(z,t)$ can itself be represented as the Cauchy transform of a positive measure supported on $S$---although this measure is not necessarily unique. This apparently new observation is implicitly contained in the statement of Theorem~\ref{mainthm0}. Indeed, the initial measure can be approximated by the zero-counting measures of a sequence of polynomials $Q_n$, whose zeros are all contained in $S$. The analyticity of $u(z,t)$ in $\Omega$ then follows from the convergence in \eqref{limitLogDervs}. The representation of $u$ as a Cauchy transform follows from the weak compactness of the associated sequence of zero-counting measures.
	
It is important to note that the analyticity statement fails if the assumption of positivity of $\mu_0$ is dropped; see Remark~\ref{remark22} in Section~\ref{sec:main}.

The fact that in the situation of complex zeros, not much more than \eqref{limitLogDervs} can be said is illustrated by the paradigmatic example of two sequences of polynomials, $Q_n(z)=z^n$ and $Q_n(z)=z^n-1$. In both cases, the sequence of normalized zero-counting measures $\sigma_{n,k_n}$, defined in \eqref{eq:defSigmaNK}, converges as $k_n/n\to t$ to the same measure $(1-t)\delta_0$, although $\mu_0$'s are clearly different: $\delta_0$ in the former case, and the unit Lebesgue measure on the unit circle, in the latter. In the notation of Theorem~\ref{mainthm0}, in the domain $0<|z|<1$, $u(z)=0$ for $z^n$, and $u(z)=1/z$ for $z^n-1$, which shows that $u(z)$ does not determine $u(z,t)$ there.

The situation is different if $Q_n$ have only real zeros or, more generally, lie on a segment of a straight line. It will be shown that  $u(z,t)$ is the Cauchy transform of a unique positive measure compactly supported on the same segment. In this context, we denote this measure by $\mu_t$. It is completely determined by the values of $u(z,t)$ in a neighborhood of infinity, and measures $\sigma_{n,k}$ weakly converge to $\mu_t$. In more general cases, when uniqueness is not guaranteed, we use $\mu_t$ to denote one of the measures whose Cauchy transform coincides with $u(z,t)$ in a neighborhood of infinity; see for example, the statement of \textit{(\ref{item3Mainthm})} in Theorem~\ref{mainthm1}.

\medskip

Let us comment on the classical roots of the problem of zeros of derivatives of polynomials.

A standard example of a flow generated by repeated differentiation is provided by the sequence of polynomials
\begin{equation} \label{eq:Rodrigues}
	Q_{2n,k} (z)=\frac{(2n-k)!}{(2n)!}\frac{d^k}{d x^k}\left(x^2-1\right)^n, \quad n=0,1,2,\dots, \quad k=[2 t n], 
\end{equation}
for some $t \in[0,1)$ (we denote by $[\cdot]$ the integer part of the argument). The notion of ``flow'' is justified by considering the parameter $t$ as ``time''.
The corresponding  normalized sequence of zero-counting measures
$\frac{1}{2 n} \chi\left(Q_{2n, [2tn] }\right) $  converges in the weak-* sense\footnote{\, The weak-* convergence in this context is a classical result and may be regarded as part of the folklore in the analytic theory of polynomials. For explicit formulas in this setting, see, for example, \cite{ShapiroRodrigues, HoskinsKabluchko21} or the derivation provided in Section~\ref{sec:examples}.} to a measure $\mu_t$ with the total mass $1-t$, which has an explicit representation:
\begin{equation}
    \label{example1a}
\mu_t = \max \left\{ \frac{1}{2}-t, 0\right\} \left(\delta_{-1}+\delta_1\right)+  \mu_t^{ac} , 
\end{equation}
where $\mu_t^{ac} $ is an absolutely continuous measure supported on the interval $[-2 \sqrt{t(1-t)}, 2 \sqrt{t(1-t)}]$, with
\begin{equation}
    \label{example1b}
d\mu_t^{ac}  (x)= \frac{1}{2 \pi} \frac{\sqrt{4 t(1-t)-x^2}}{1-x^2}\, d x. 
\end{equation}
As $t$ progresses, $\mu_t$ represents the flow of zeros of iterated derivatives, starting from its initial distribution $\mu_0=(\delta(-1)+\delta(1))/2$. 
The measure $\mu_t$ is well known: it is related to zeros of Gegenbauer polynomials and hence to the angular part of the hyperspherical harmonics (see, e.g.~\cite[Examples IV.1.18 and IV.6.2]{SaTo}). In fact, for $k=n$, \eqref{eq:Rodrigues} is just the Rodrigues formula for the Legendre polynomials, orthogonal in $[-1,1]$ with respect to weight $w(x)=1$. This also explains why $\mu_t$, for $t=1/2$, is one-half of the equilibrium measure (or the Robin distribution) of the interval $[-1,1]$. Notice also the curious symmetry of $\mu_t^{ac} $ with respect to the time reversal ($t\mapsto 1-t$).  Since every classical family of orthogonal polynomials satisfies a Rodrigues formula, each of these families originates its own flow of measures with interesting properties.
 
The formula \eqref{eq:Rodrigues} can be extended in several directions. For instance, the Type II  Hermite-Pad\'e (or multiple orthogonal) polynomials $Q_{2 n}$ of degree $2n$, defined up to normalization, by the orthogonality conditions
 \begin{equation} \label{eq:HP}
 	\int_{-1}^0 Q_{2 n}(x) x^k d x=  \int_0^1 Q_{2 n}(x) x^k d x=0, \quad k=0,1, \dots n-1,
 \end{equation}
 have the representation
 \begin{equation}
     \label{Kalyaginexample}
 Q_{2 n}(x)= \frac{(2n)!}{(3n)!}\frac{d^n}{d x^n}\left(x^3-x\right)^n,
  \end{equation}
obtained originally by Kalyagin \cite{kaliaguine:1981}, who used it to study the asymptotic properties of $Q_{2 n}$. As before, we can consider 
$$
Q_{2n,k} (z)=\frac{(3n-k)!}{(3n)!}\frac{d^k}{d x^k}\left(x^3-x\right)^n, \quad n=0,1,2,\dots, \quad k=[3 t n], 
$$
and the corresponding family of measures $\mu_t$ of their limiting zero distributions. In this case, the explicit formulas for $\mu_t$ are not as simple as before. 
Similar Rodrigues-type formulas exist for other families of semi-classical and multiple orthogonal polynomials \cite{Aptekarev:97, MR4582606, MR4582563}. Recently, the asymptotics of polynomials given by \eqref{eq:Rodrigues}, where $x^2-1$ on the right-hand side is replaced by an arbitrary polynomial $P$, possibly with complex zeros, has been considered in \cite{ShapiroRodrigues}; see Section~\ref{sec:examples} for more details and comments on some results of this work.

The interest in this subject has recently been revitalized by a series of papers by Steinerberger \cite{MR4242313, MR4011508, Steiner2021}, and several other works have followed in rapid development, establishing interesting connections with free probability and the inviscid Burgers (or Hopf) partial differential equation. 

First, Steinerberger wrote in \cite{MR4011508} a non-local diffusion equation that should describe the evolution of the absolutely continuous part $\mu_t^{ac} $ of $\mu_t$. Observing the similarities with an equation in \cite{ShlyakhtenkoTao21}, he conjectured in \cite{Steiner2021} that $\mu_t$ can be written in terms of the fractional free additive convolution of the initial measure $\mu_0$. Hoskins and Kabluchko rigorously proved this \cite{HoskinsKabluchko21}; later, it was also established by Arizmendi et al. \cite{Arizmendi21}, this time using elegant arguments involving only finite free convolution of polynomials, a notion developed in a series of works of Markus and collaborators; see \cite{MR4408504}. In the case of complex random zeros, the free probability interpretation of repeated differentiation has been obtained in \cite{campbell2023fractional}. 

The paper \cite{MR4408504} also contains another interesting motivation for studying our main problem: the average characteristic polynomial of all $k\times k$ minors of an $n\times n$ matrix $A$ can be expressed in terms of the $(n-k)$-th derivative of the characteristic polynomial of $A$, see \cite[Lemma 1.17]{MR4408504} for a precise formulation.

Other works addressing several aspects of repeated differentiation (many of them, in the context of random polynomials) are \cite{angst2023sure, MR4474893, campbell2023fractional, galligo2022modeling, MR4447137, kabluchko2022repeated, MR4458083}. A related differential operator (the heat-flow operator), which is degree-preserving and contains derivatives of all orders, was studied in \cite{kabluchko2023heat}. The authors present a transport map $T_t$, defined in terms of the Cauchy (or Stieltjes) transform of $\mu_0$, such that $T_t(\mu_0)=\mu_t$. Moreover, they show that the Cauchy transform of $\mu_t$ satisfies the inviscid Burgers equation (written in a slightly different form than the one appearing in the following); see \cite[Eq. (5.4)]{kabluchko2023heat}. As they point out, the complex Burgers equation with the initial value given by a Cauchy transform has recently been used in the description of the growth of multiple SLE, see \cite{MR3573689, MR3773856, MR4259446}. It also appears in \cite{MR2262808} in the description of the evolution of the end-points of the support of the equilibrium measure in a specific external field depending on $t$ (see also \cite{MR3302630}). Also, equation (1.17) in \cite{ShlyakhtenkoTao21} can be reduced to \eqref{inviscidBurgers}. 
Another equivalent formulation of \eqref{inviscidBurgers} appears again in a recent paper \cite{kabluchko2023fractional}; the authors even address the behavior of individual zeros and conjecture (proving it in the rotationally invariant case) that the Cauchy transform of the measure, evaluated along the path of a single root, must remain constant, and the roots should move along the characteristic curves of a certain nonlinear PDE. 

In this paper, we address the problem of the macroscopic description of the zeros of polynomials under repeated differentiation, with the most general assumptions stated in Theorem~\ref{mainthm0}. However, our goal is not only to prove (and partially reprove) the results but also to show why they are true. The main feature of our approach is that it is totally elementary and uses only basic facts from complex analysis.

Our main findings can be summarized as follows. The complex Hopf equation \eqref{inviscidBurgers} plays the main role. The connection of the problem on the flow of derivatives under differentiation with this equation lies in the elementary fact that the Cauchy transform of the zero-counting measure of a polynomial can be expressed in terms of the logarithmic derivative of this polynomial. As a first step, we show that if its initial value is given by the Cauchy transform of a compactly supported probability measure $\mu_0$, then there is a universal domain, depending only on the support of $\mu_0$, where not only do the solutions of the equation remain analytic, but they can be written as the Cauchy transform of a measure for all values of the parameter $t\in (0,1)$. 

We show that the solution of the Hopf equation by the method of characteristics is precisely the expression of the fact that $\mu_t$ can be written as the fractional free convolution of $\mu_0$.

Finally, we show that the nonlocal diffusion from \cite{MR4011508} is merely a projection of the Hopf equation onto the real line via the Sohotski-Plemelj identities (or the Stieltjes--Perron inversion formula).

\section{Main results}\label{sec:main}

Let us start by setting up the following notation: for $0<R<+\infty$, denote $ \mathbb D_R =\{z\in \C:\, |z|<R\}$, $\overline{ \D_R} =\{z\in \C:\, |z|\le R\}$, and $\Omega_R =\{z\in \C:\, R<|z|<+\infty\}=\C \setminus \overline{ \D_R} $. Notice that the open domain $\Omega_R\subset \C$ does not include infinity, while its closure in $\overline{\C}$, that is, $\overline{\Omega_R}=\{z\in \C:\, R\le|z|\le+\infty\}$ does. We also denote by $\partial_z$ the differentiation operator with respect to the real or complex parameter $z$ (each time, it will be clear from the context).

The \textit{inviscid Burgers'} or  \textit{Hopf equation}, considered in this paper, is written as
\begin{equation} \label{pde}
	\partial_t u = \frac{\partial_z u}{u}, \quad u=u(z;t). 
\end{equation}
A more familiar form $\partial_t u  + u \,\partial_z u =0$ can be obtained, for instance, by replacing $u$ by $-1/u$. We consider that the parameter $t$ takes real values in $[0,1]$, while $z$ is on the complex plane; in our case, $z$ is in a neighborhood of infinity, that is, a domain of the form $\Omega_R$. 

It is well known that the solutions of \eqref{pde} can exhibit shocks in a finite time $t$; we will be interested in a very particular type of initial value problem for \eqref{pde}, when $u(z,0)$ is the Cauchy transform of a probability measure.

Given a finite Borel (in general, signed) measure $\mu$ on $\C$, we define its \textit{Cauchy} (also known as Stieltjes or Markov) \textit{transform}  as
\begin{equation} \label{def:Cauchy}
	\mathcal C^\mu(z)\isdef  \int \frac{d\mu(y)}{z-y},
\end{equation}
which is an holomorphic function in $\C \setminus \supp(\mu)$.  Moreover, if $\mu$ is compactly supported, then 
\begin{equation}
    \label{seriesCauchyTransform}
\mathcal C^\mu(z) = \sum_{k=0}^\infty \frac{m_k}{z^{k+1}} 
\end{equation}
converges in a neighborhood of infinity; the coefficients  
\begin{equation}
    \label{moments}
    m_j =\int z^j \, d\mu(z), \quad j=0, 1, 2,\dots
\end{equation}
are the \textit{moments} of the measure $\mu$. 

Let $\mu_0$ be a compactly supported Borel probability measure on $\C$, and let $0<r<+\infty$ be such that the support $\supp \mu_0 \subset \D_r$. One of the central results of this paper is the fact that the initial value problem for the equation \eqref{pde} with the initial datum
\begin{equation} \label{pdeInitial}
	u(z,0)=\mathcal C^{\mu_0}(z)
\end{equation}
has an analytic solution $u(z,t)$ for all $t\in [0, T]$, $0<T<1$, and $z$ in a neighborhood of infinity that depends only on $r$ and $T$, but not on the measure $\mu_0$ itself:
\begin{thm} \label{mainthm1}
	Let $0<T<1$ and $0<r<+\infty$ be fixed. Then 
	\begin{enumerate}[(i)]
		\item \label{item1Mainthm}there exists an $R=R(T)>2r$, such that for any probability  measure $\mu_0$ with $\supp(\mu_0) \subset \D_r$, and for $(z,t)\in \Omega_R \times [0,T] $ there exists a unique solution $u(z,t)$ of \eqref{pde} satisfying the initial condition 
		$$
		u(z, 0)= \mathcal C^{\mu_0}(z).
		$$
		\item \label{item2Mainthm} For $(z,t)\in \Omega_R \times [0,T] $, $u(z,t)$ is given  by
		\begin{equation} \label{charact1}
			u(z,t) = u (s, 0), 
		\end{equation}
		where the unique $s=s(z)$ is implicitly defined by the equation $F(z,s)=0$, with 
		\begin{equation} \label{charact1bis}
			F(z,s) \isdef 	z  -s+\frac{t}{u(s, 0)}.
		\end{equation}

		\item 
  \label{item3Mainthm} for each $t\in [0,1)$, there is a measure $\mu_t$, with $\supp(\mu_t) \subset\D_r$, such that
  \begin{equation}
      \label{u=Cauchy}
  u(z,t) =\mathcal C^{\mu_t}(z), \quad z \in \Omega_r.
  \end{equation}
  The moments
  \begin{equation}
      \label{u=moments}
  m_j(t) \isdef \int z^j \, d\mu_t(z), \quad j=0, 1, 2,\dots
  \end{equation}
  of $\mu_t$ are determined uniquely by $\mu_0$\footnote{\, Recall that $\mu_t$ itself not necessarily is uniquely defined by its moments.}, with $m_0(t)= 1-t$. Moreover, for $k\in \N$, $m_k(t)$ is a polynomial in $t$ of degree at most $k$, such that 
 \begin{equation}
     \label{limitMK}
\lim_{t\to 1} \frac{m_k(t)}{1-t}= m_1(0)^k, \qquad k\ge 1.
  \end{equation}
	\end{enumerate}
\end{thm}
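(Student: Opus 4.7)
My plan is to address parts (i) and (ii) through the method of characteristics for the Hopf equation and to deduce part (iii) from the resulting representation, with the existence of $\mu_t$ as a positive Borel measure being the main obstacle.

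For parts (i) and (ii), I would rewrite \eqref{pde} as $\partial_t u - u^{-1}\partial_z u = 0$, whose characteristic ODE reads $dz/dt = -1/u$ with $u$ constant along characteristics. The characteristic emanating from $(s,0)$ therefore carries the value $u(s,0) = \mathcal{C}^{\mu_0}(s)$ along the line $z(t) = s - t/\mathcal{C}^{\mu_0}(s)$, which is precisely $F(z,s) = 0$ from \eqref{charact1bis}. To solve uniquely for $s(z,t)$ when $(z,t) \in \Omega_R \times [0,T]$, I introduce $\phi(s) \isdef s - 1/\mathcal{C}^{\mu_0}(s)$; since $|m_k(0)| \le r^k$, this is analytic and bounded on $\Omega_\rho$ for any $\rho > 2r$, with $\phi(s) = m_1(0) + O(1/s)$ as $s \to \infty$ and hence with $|\phi'(s)| = O(1/s^2)$. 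The characteristic equation is equivalent to the fixed-point problem $s = z/(1-t) - t\phi(s)/(1-t)$; choosing $R = R(T) > 2r$ large enough to make $T\sup_{\Omega_\rho}|\phi'|/(1-T)$ strictly less than one, Banach's contraction principle on a small disk around $z/(1-t)$ produces a unique analytic $s(z,t) \in \Omega_r$, uniformly in $(z,t) \in \Omega_R \times [0,T]$. Setting $u(z,t) \isdef \mathcal{C}^{\mu_0}(s(z,t))$ then gives a solution of \eqref{pde} by implicit differentiation of $F(z,s) = 0$, and uniqueness in $\Omega_R \times [0,T]$ follows because any solution must be constant along its characteristics, reducing to the uniqueness of $s$ just established.

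For part (iii), the moment expansion of $u$ at infinity comes directly from the characteristic equation. From $\mathcal{C}^{\mu_0}(s) = \sum_{k\ge 0} m_k(0)\, s^{-k-1}$ one obtains $1/\mathcal{C}^{\mu_0}(s) = s - m_1(0) + O(1/s)$, so that \eqref{charact1bis} reads $z = (1-t)s + t\,m_1(0) + O(1/s)$. Lagrange inversion then yields $s(z,t) = z/(1-t) - t\,m_1(0)/(1-t) + O(1/z)$, and substitution produces
\begin{equation*}
u(z,t) = \mathcal{C}^{\mu_0}(s(z,t)) = \frac{1-t}{z} + \sum_{k \ge 1} \frac{m_k(t)}{z^{k+1}},
\end{equation*}
which immediately gives $m_0(t) = 1-t$. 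Tracking orders of $t$ through the recursion shows that each $m_k(t)$ is a polynomial in $t$ of degree at most $k$, with coefficients depending only on $m_0(0),\dots,m_k(0)$. For the $t \to 1$ asymptotic, rescaling $s = \sigma/(1-t)$ turns \eqref{charact1bis} into $\sigma = z - m_1(0) + O(1-t)$; hence $s(z,t) \sim (z - m_1(0))/(1-t)$ and $u(z,t) \sim (1-t)/(z - m_1(0))$, whose Laurent coefficients at infinity are $(1-t)\,m_1(0)^k$, which is exactly \eqref{limitMK}.

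The chief obstacle is exhibiting a genuine positive Borel measure $\mu_t$ that realizes $u(\cdot,t)$ as its Cauchy transform on all of $\Omega_r$, not merely on $\Omega_R$. I would approach this by approximation: pick a sequence of polynomials $Q_n$ of degree $n$ with zeros in $\D_r$ whose normalized zero-counting measures $\sigma_{n,0}$ converge weakly-$*$ to $\mu_0$. By the Gauss--Lucas theorem, the zeros of every derivative $Q_n^{(k)}$ remain in $\conv(\supp\sigma_{n,0}) \subset \D_r$, so the measures $\sigma_{n,k_n}$ (for any $k_n/n \to t$) form a uniformly tight family of positive measures with total mass $(n-k_n)/n \to 1-t$, supported in a fixed compact subset of $\D_r$; Banach--Alaoglu extracts a weak-$*$ subsequential limit $\mu_t$. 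To identify $\mathcal{C}^{\mu_t}$ with $u(\cdot,t)$ the crucial input is the convergence $v_{n,k_n}(z) = \mathcal{C}^{\sigma_{n,k_n}}(z) \to u(z,t)$ uniformly on compact subsets of $\Omega_R$, which is precisely the content of Theorem~\ref{mainthm0}. Establishing this discrete-to-continuous passage is the technical heart any complete argument must confront: the natural route is to derive a difference recursion for $v_{n,k}$ from the algebraic identity relating $Q_{n,k}$ to $Q_{n,k+1}$ and to show that, under the scaling $k/n \to t$, this recursion degenerates to the Hopf equation \eqref{pde}. Once the identification holds on $\Omega_R$, the analyticity of $\mathcal{C}^{\mu_t}$ on $\Omega_r$ combined with the identity principle propagates it to all of $\Omega_r$; uniqueness of $\mu_t$ (and hence convergence of the full sequence $\sigma_{n,k_n}$) follows because its moments are pinned down by the Laurent coefficients $m_k(t)$ computed above.
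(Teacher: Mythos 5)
Your treatment of parts (i) and (ii) follows the same method-of-characteristics route as the paper; the only substantive difference is that you solve the characteristic equation $F(z,s)=0$ by a contraction argument for the map $s\mapsto z/(1-t)-t\phi(s)/(1-t)$ near infinity, whereas the paper inverts at the origin ($s\mapsto 1/s$, $z\mapsto 1/z$) and applies Rouch\'e's theorem (Propositions~\ref{prob:boundsCauchy} and~\ref{prop:solutionPDE}). Both routes give the required uniformity in $\mu_0$, since for a positive probability measure on $\D_r$ the bounds on $\phi$ and $\phi'$ depend only on $r$; one point you should make explicit is that \emph{every} solution of $F(z,s)=0$ with $s\in\Omega_{2r}$ automatically lies in the small disk around $z/(1-t)$ where your contraction acts (because $|\phi|$ is bounded by roughly $r$ there), so that local uniqueness upgrades to the uniqueness asserted in (ii). Deferring the existence of the positive measure $\mu_t$ to the polynomial approximation result is exactly what the paper does (it is obtained from Theorem~\ref{mainthm2}), so that is not a defect of your plan.

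The genuine gap is in your proof of \eqref{limitMK}. You propose to read off the limits of the Laurent coefficients $m_k(t)$ from the asymptotics $u(z,t)\sim(1-t)/(z-m_1(0))$ coming from the rescaled characteristic equation. But that asymptotic is only available where the characteristic representation holds, i.e.\ on $\Omega_{R(T)}$ with $R(T)\asymp(1+r)(1+T)/(1-T)\to\infty$ as $t\to1$. Extracting the $k$-th coefficient by a contour integral over $|z|=cR(t)$, the error $u(z,t)/(1-t)-1/(z-t\,m_1(0))=\mathcal O\bigl((1-t)/|z|^2\bigr)$ contributes $\mathcal O\bigl(R(t)^{k-1}(1-t)\bigr)=\mathcal O\bigl((1-t)^{2-k}\bigr)$, which does not tend to zero for $k\ge2$ and in fact blows up for $k\ge3$. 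So your argument proves \eqref{limitMK} only for $k=1$. To handle all $k$ you need either uniform-in-$t$ control on a fixed circle $|z|=2r$ (which requires the measure representation $u(\cdot,t)=\mathcal C^{\mu_t}$ and still leaves the limit to be identified), or the route the paper takes: substitute the Laurent series into $\partial_z u=u\,\partial_t u$ to obtain the exact recursion $\sum_{j=0}^{k}m_{k-j}(t)m_j'(t)=-(k+1)m_k(t)$, solve the resulting first-order linear ODE with integrating factor $(1-t)^{-k-1}$, and prove $m_k(1)=0$ and $-m_k'(1)=m_1(0)^k$ by induction. Relatedly, your claim that each $m_k(t)$ is a polynomial in $t$ of degree at most $k$ is asserted by ``tracking orders of $t$ through the recursion'' without the recursion ever being written down; in the paper this is precisely what the integrating-factor computation delivers, so this step needs to be supplied rather than assumed.
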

In essence, this theorem guarantees that for any compactly supported initial positive measure $\mu_0$, the flow governed by the Hopf equation preserves analyticity and admits a representation as a Cauchy transform throughout its evolution.
\begin{remark} \label{remark22}
\begin{enumerate}[(a)]
\item Using the change of variables $w=u(s, 0)$ we can rewrite \eqref{charact1}--\eqref{charact1bis} as
\begin{equation} \label{charact2}
	u^{-1} (w, t) = u^{-1} (w, 0) -\frac{t}{w},
\end{equation}
where $u^{-1}$ is the functional inverse of $u$, or equivalently, as
 \begin{equation} \label{Shapiro}
 w=u\left(z+\frac{t}{w}, 0\right),
 \end{equation}
 which is an equation on $w=u(z, t)$.
 
\item	The assertion in  \textit{(\ref{item1Mainthm})} of Theorem~\ref{mainthm1} about $R$ depending only on $r$ and $T$ is in general false for a unit ($m_0=1$) but signed measure. A simple example is the case
	$$
	\mu = (a+1)\delta_0 - a\delta_1, \quad a>0,
	$$
	for which  
	\begin{equation}
	    \label{examplesigned}
     u(z,0)=\frac{a+1}{z}-\frac{a}{z-1}
	\end{equation}
	is holomorphic in $\Omega_1$, but
	$$
	u(z,t)= \frac{(1-2 t) z - (a+1-t)+\sqrt{z^2-2(t-1+a(1-2t))z + (a+1-t)^2}}{2 (z-1) z},
	$$
has branch points at $\zeta_+(t)=\overline{\zeta_-(t)}$, with $|\zeta_{\pm}|=a+1-t$. This shows that in this case there is no bounded $R$ that works for all values of the parameter $a>0$.

\item We can formally define the derivative with respect to $t$ of the measures $\mu_t$,
	$$
	\tau_t \isdef -\lim_{\varepsilon\to 0+} \frac{1}{\varepsilon}\left( \mu_{t+\varepsilon} -\mu_t \right).
 	$$
By \textit{(\ref{item3Mainthm})} of Theorem \ref{mainthm1}, $\tau_t (\C)=1$ for all $t\in (0,1)$. In the terminology of \cite{MR1618739}, Theorem \ref{mainthm1} states that $\tau_t$ is the \textit{inverse Markov transform} of the measure $\mu_t$. 

\item Formula \eqref{limitMK} can be interpreted as that $\mu_t$ disappears, as $t\to 1-$, as a mass point at $m_1(0)$, i.e. at the center of mass of the initial distribution $\mu_0$. This is the case, at least, when $\supp \mu_0\subset \R$.
\end{enumerate}
\end{remark}
 
Recall that the Hilbert transform of a measure $\mu$ on $\R$ is defined by
  \begin{equation} \label{defHillbertTransf}
		\mathcal H_\mu(x):= \frac{1}{\pi} \, \text{p.v.} \int \frac{d\mu(y)}{x-y}.
	\end{equation}
Theorem \ref{mainthm1} has the following consequence when the initial data is supported on the real line: the flow remains uniquely determined and real, and satisfies a nonlinear nonlocal transport equation.
\begin{cor}
 \label{itemNewMainthm} 
Let $ \mu_0$ be a compactly supported probability measure on $\R$, and let $\mu_t$, $t\in [0,1)$, be as described in (\ref{item3Mainthm}) of Theorem~\ref{mainthm1}. Then for each $t\in [0,1)$, the measure $\mu_t$ can be taken also supported on $\R$, and under this condition is unique.. If $ \mu_t $ is absolutely continuous with respect to the Lebesgue measure on $\R$ then the density
 		$$
 		f(x,t) \isdef \frac{d\mu_t(x)}{dx}
 		$$
 		satisfies
	\begin{equation} \label{nonlinearDiff}
 		 \partial_t\, f(x,t) = - \frac{1}{\pi}  \partial_x \arctan \left( \frac{\mathcal H_{\mu_t}(x)}{f(x,t)} \right) ,
 		\end{equation}
 	where $H_{\mu_t}$ is its Hilbert transform.
\end{cor}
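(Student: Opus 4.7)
The plan splits into two parts: first establishing that $\mu_t$ is supported on $\R$ and unique, then deriving the PDE by taking a boundary trace of the Hopf equation. For the first part, I would realize $\mu_0$ as a weak-* limit of empirical measures of polynomials with only real zeros and push this through the differentiation flow via Rolle's theorem. Concretely, since $\supp\mu_0\subset[-r,r]$ is compact, choose points $x_1^{(n)},\dots,x_n^{(n)}\in\supp\mu_0$ so that $\chi(P_n)/n\to\mu_0$ weakly-*, where $P_n(z)=\prod_{j=1}^n(z-x_j^{(n)})$. This is equivalent to $v_{n,0}(z)\to\mathcal{C}^{\mu_0}(z)=u(z,0)$ uniformly on compact subsets of $\Omega_r$, so Theorem~\ref{mainthm0} applies to $\{P_n\}$ and yields $v_{n,k_n}(z)\to u(z,t)$ on $\Omega_r$. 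Because a polynomial with only real zeros has derivatives with only real zeros, every $\sigma_{n,k_n}$ is supported in $[-r,r]$; Helly's selection theorem then produces a weak-* convergent subsequence whose limit $\nu$ is supported in $[-r,r]$ and has Cauchy transform $u(\cdot,t)$ on $\Omega_r$. Expansion at infinity shows $\nu$ has the same moments as the measure $\mu_t$ of Theorem~\ref{mainthm1}(iii), and since a compactly supported measure on $\R$ is determined by its moments, $\nu=\mu_t$; this gives simultaneously that $\mu_t$ is supported on $\R$ and is the unique such measure.

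For the second part, assume $\mu_t$ has density $f(x,t)$. The Plemelj-Sokhotski formulas combined with the paper's normalization of $\mathcal H_\mu$ give the boundary values
\begin{equation*}
u(x+i0,t)=\pi\,\mathcal H_{\mu_t}(x)-i\pi f(x,t).
\end{equation*}
Writing $U=\pi\mathcal H_{\mu_t}$ and $V=-\pi f$, analyticity of $u$ in the upper half-plane away from $\supp\mu_t$ gives the boundary relation $\partial_z u=\partial_x U+i\partial_x V$. Substituting into \eqref{pde}, multiplying by $\bar u=U-iV$, and extracting imaginary parts yields
\begin{equation*}
\partial_t V=\frac{U\,\partial_x V-V\,\partial_x U}{U^{2}+V^{2}}=\partial_x\arctan(V/U).
\end{equation*}
Since $V/U=-f/\mathcal H_{\mu_t}$ and the identity $\arctan(-a)=\arctan(1/a)-\tfrac{\pi}{2}\operatorname{sgn}(a)$ turns $\arctan(V/U)$ into $\arctan(\mathcal H_{\mu_t}/f)$ modulo a locally constant term, $\partial_x\arctan(V/U)=\partial_x\arctan(\mathcal H_{\mu_t}/f)$; combined with $\partial_t V=-\pi\partial_t f$, this is exactly \eqref{nonlinearDiff}.

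The only genuine difficulty is the passage to the boundary: \eqref{pde} is an identity on $\Omega_R$, and converting it into a pointwise PDE on $\R$ requires $u(\cdot,t)$ and $\partial_z u(\cdot,t)$ to have continuous non-tangential limits that are differentiable in $x$. Absolute continuity of $\mu_t$ secures this at Lebesgue-a.e.\ $x$; at the exceptional points where $f$ or $\mathcal H_{\mu_t}$ vanishes, the equation must be interpreted in the obvious limiting (or distributional) sense, and sign branches for $\arctan$ must be tracked on each component on which $f$ and $\mathcal H_{\mu_t}$ keep their signs. All the rest is routine algebra.
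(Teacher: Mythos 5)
Your derivation of \eqref{nonlinearDiff} is correct and is essentially the paper's own argument: both take the Sokhotski--Plemelj boundary values of $u(\cdot,t)$ on $\R$ and project the Hopf equation \eqref{pde} onto the real line; your step of multiplying by $\bar u$ and extracting imaginary parts is the same computation as the paper's subtraction of the two boundary values $u_{\pm}$ (since $u_-=\overline{u_+}$ for a measure on $\R$), and your handling of the $\arctan(V/U)$ versus $\arctan(\mathcal H_{\mu_t}/f)$ branch discrepancy via a locally constant term is the right way to close the algebra. The one genuine difference is that the paper's proof of Corollary~\ref{itemNewMainthm} does not address the first claim (that $\mu_t$ is unique and supported on $\R$) at all, implicitly deferring it to the real-zero case of Theorem~\ref{mainthm2}; you supply an explicit argument for it by approximating $\mu_0$ with zero-counting measures of real-rooted polynomials, using that real-rootedness is preserved under differentiation, and invoking Helly plus moment determinacy of compactly supported measures on $\R$. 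That argument is sound and makes the corollary self-contained, at the modest cost of re-importing the machinery of Theorem~\ref{mainthm0}. Your boundary-value normalization $u_+(x,t)=\pi\mathcal H_{\mu_t}(x)-i\pi f(x,t)$ is in fact the consistent one for the density of $\mu_t$ itself (total mass $1-t$), and your closing remarks on the regularity needed to justify the boundary trace address a point the paper passes over silently.
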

The ``non-local transport equation'' \eqref{nonlinearDiff} coincides with the one found in \cite{MR4011508}.  

The initial value problem for the Hopf equation, whose existence and analyticity have been established in Theorem~\ref{mainthm1}, is directly related to the central problem of this paper, on the flow of zeros of a sequence of polynomials under iterated differentiation, as explained in the Introduction. 

Namely, recall that for a sequence of monic polynomials $Q_n$, with $\deg Q_n=n$, $n\in \N$, and the associated polynomials \eqref{def:QN}, we defined the normalized zero-counting measures \eqref{eq:defSigmaNK}, such that  
$$
\sigma_{n,k} \left( \C \right)= \frac{n-k}{n}. 
$$
The only assumption on $\{Q_n\}$ is that there exists $0<r<+\infty$ such that for each sufficiently large $n \in \N$, all zeros of $Q_n$ belong to $\D_r$, and by Gauss-Lucas' theorem, the same property is shared by all polynomials $Q_{n,k}$, defined in \eqref{def:QN}. Equivalently, we can say that for all sufficiently large $n \in \N$, measures $\sigma_{n,k}$, $k\in \{0, 1,\dots, n-1\}$, are supported in $\D_r$.

A key consequence of Theorem~\ref{mainthm1} is the existence of a universal value $R=R(T)>2r$ such that Hopf's equation \eqref{pde} has a holomorphic and uni-valued solution in $\Omega_R$, for any $0<t<T$, and for \textit{any} initial condition given by the Cauchy transform of \textit{any} probability measure supported in $\overline{\D_r}$. In particular, if for each $n\in \N$, we denote by $u_n(z,t)$ the solution of \eqref{pde} with the initial condition 
\begin{equation}
    \label{initialCondiU_n}
    u_n(z,0)\isdef \mathcal C^{\sigma_{n,0}}(z) =  \frac{  Q'_{n }(z)}{n\, Q_{n }(z)}. 
\end{equation}
then all $u_n(z,t)$ are guaranteed to exist and to be analytic in $\Omega_R \times [0,T]$.

As in \eqref{eqCauchyTransf00}, define for $k\in \{0, 1,\dots, n-1\}$, $n\in \N$,
\begin{equation} \label{eqCauchyTransf1}
v_{n,k} (z)\isdef	\mathcal C^{\sigma_{n,k}}(z)=	\frac{  Q'_{n,k}(z)}{n\, Q_{n,k}(z)} = \frac{n-k}{n} \, \frac{ Q_{n,k+1}(z)}{ Q_{n,k}(z)}  .
\end{equation}
For all such $k, n$, $v_{n,k} $ is holomorphic in $\overline{\Omega_r}$, and
\begin{equation}
    \label{estimateatinftyvnk}
v_{n,k} (z) = \frac{n-k}{n  }\, \frac{1}{  z } + \mathcal O(z^{-2}), \quad z\to \infty.
\end{equation}
Taking the logarithmic derivative in both sides of \eqref{eqCauchyTransf1} we obtain that
\begin{equation} \label{eqCauchyTransfNew}
	v_{n,k+1}(z)- v_{n,k}(z) =\frac{1}{n}\, \frac{\partial_z\, v_{n,k}(z)}{  v_{n,k}(z)}. 
\end{equation}
This identity can be regarded as a discrete version of Hopf's equation \eqref{pde}, and thus, it is natural to expect that under our assumptions, functions $v_{n,k}(z)$ and $u_n(z,k/n)$ are close for sufficiently large $n$'s.

If additionally
\begin{equation} \label{assumptionWeakConvergence}
	\sigma_{n,0}  \stackrel{*}{\longrightarrow} \mu_0, \quad n\to \infty,
\end{equation}
where $\stackrel{*}{\longrightarrow}$ denotes the weak-* convergence and $\mu_0$ is a  probability measure on $\C$ supported on $\overline{\D_r}$, then the initial conditions \eqref{initialCondiU_n} converge to the initial condition \eqref{pdeInitial}, at least in the fixed neighborhood of infinity, and we can expect that the solutions $u_n(z,t)$ of the equation \eqref{pde} with the initial conditions \eqref{initialCondiU_n} will converge there to $u(z,t)$, the solution of the same equation \eqref{pde}, but with the initial condition \eqref{pdeInitial}. These heuristic arguments can be made rigorous, although the trivially looking assertion about the proximity of the solutions to the ``discrete'' and ``continuous'' Hopf equations requires some careful analysis due to the nonlinear character of these equations. For convenience of the reader, we formulate Theorem~\ref{mainthm0} here, in a more precise form:
\begin{thm} \label{mainthm2}
 Let $Q_n$ be a sequence of polynomials with uniformly bounded zeros such that \eqref{assumptionWeakConvergence} holds. 
Then for any $t \in[0,1)$, there exists a positive Borel measure $\mu_t$ with $\supp \mu_t \subset \conv (\supp \mu_0)$ and $\mu_t(\C)=1-t$, such that in a neighborhood of infinity,
\begin{equation} \label{coincidenceCauchy}
 \mathcal C^{\mu_t}(z) =u(z,t),
\end{equation}
where $u$  is the solution of equation \eqref{pde} with the initial condition \eqref{pdeInitial}. 

Furthermore, for any sequence $k_n$ of natural numbers with $k_n / n \rightarrow t$ we have that
	$$
	\lim_n  \mathcal C^{  \sigma_{n,k_n}}(z) =\mathcal C^{\mu_t}(z)
	$$
	locally uniformly in $\C\setminus \left( \conv (\supp \mu_0)\right)$.
 
	Finally, if all zeros of $Q_n$ are real, then $\mu_t$ is uniquely determined from $u$, and  
	$$
	\sigma_{n, k_n} \stackrel{*}{\longrightarrow} \mu_t, \quad n \to \infty. 
	$$
\end{thm}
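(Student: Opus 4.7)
The plan is a discrete-to-continuous stability argument bridging \eqref{eqCauchyTransfNew} and \eqref{pde}. For each $n$, let $u_n(z,t)$ be the unique solution of \eqref{pde} with initial datum $\mathcal{C}^{\sigma_{n,0}}(z)$; since $\sigma_{n,0}$ is a probability measure supported in $\D_r$, Theorem~\ref{mainthm1} provides a single $R = R(T) > 2r$, independent of $n$, on which every $u_n$ is analytic on $\Omega_R \times [0,T]$. The weak-* assumption \eqref{assumptionWeakConvergence} gives $\mathcal{C}^{\sigma_{n,0}} \to \mathcal{C}^{\mu_0}$ locally uniformly on $\Omega_r$; inserting this into the characteristic formula \eqref{charact1}--\eqref{charact1bis} and invoking the implicit function theorem yields $u_n \to u$ locally uniformly on $\Omega_R \times [0,T]$.

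The central step is the estimate
\begin{equation*}
\max_{0 \leq k \leq n}\, \sup_{z\in K}\, \bigl| v_{n,k}(z) - u_n(z,k/n) \bigr| \longrightarrow 0, \qquad n\to\infty,
\end{equation*}
for every compact $K \subset \Omega_R$. The identity \eqref{eqCauchyTransfNew} is an explicit Euler scheme of step $1/n$ for the nonlinear PDE \eqref{pde}, and Taylor-expanding $u_n$ in $t$ while using \eqref{pde} itself to substitute $\partial_t u_n$ produces a local truncation error of size $O(1/n^2)$, uniform in $k$ and $z \in K$. Setting $e_{n,k}(z) := v_{n,k}(z) - u_n(z,k/n)$, the Lipschitz behavior of the map $G(w,w') = w'/w$ on the range of the two functions---justified because $|u_n|$ and $|v_{n,k}|$ behave like $1/|z|$ near infinity and hence stay bounded below on $\Omega_R$ after possibly enlarging $R$---together with Cauchy's integral formula bounding $\partial_z e_{n,k}$ by $\sup_{K'}|e_{n,k}|$ on a slightly thicker compact $K' \supset K$, yields a discrete Gronwall inequality. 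Iterating over $k_n \leq n$ steps gives $\sup_K |e_{n,k_n}| = O(e^{Ct}/n) \to 0$, and combined with the first paragraph provides $v_{n,k_n} \to u(\cdot,t)$ locally uniformly on $\Omega_R$; identifying $u(\cdot,t) = \mathcal{C}^{\mu_t}$ on $\Omega_R$ via Theorem~\ref{mainthm1}\,\textit{(\ref{item3Mainthm})} gives \eqref{coincidenceCauchy}.

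Convergence of $\mathcal{C}^{\sigma_{n,k_n}}$ on a neighborhood of infinity is equivalent to convergence of all moments \eqref{moments}. Since the $\sigma_{n,k_n}$ are positive measures uniformly supported in $\overline{\D_r}$ with masses $(n-k_n)/n \to 1-t$, the moment problem on a compact set singles out a unique positive weak-* limit; this limit must coincide with $\mu_t$, proving positivity and total mass $1-t$. For $\supp \mu_t \subset \conv(\supp \mu_0)$ one argues analytically: $\mathcal{C}^{\mu_0}$ is nonvanishing on $\C \setminus \conv(\supp \mu_0)$ by a separating half-plane argument (valid because $\mu_0 \geq 0$), so $\Phi_t(s) := s - t/\mathcal{C}^{\mu_0}(s)$ is analytic there with $\Phi_t'(s) \to 1-t \neq 0$ as $s \to \infty$, and the argument principle on large contours gives global injectivity for $t<1$. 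The identity $u(z,t) = \mathcal{C}^{\mu_0}(\Phi_t^{-1}(z))$ then extends $u(\cdot,t)$ analytically across $\C \setminus \conv(\supp \mu_0)$, which forces $\supp \mu_t$ into that set. Local uniform convergence on $\C \setminus \conv(\supp \mu_0)$ follows from the weak-* convergence just obtained together with a normal-family argument, using that any pole of $v_{n,k_n}$ outside $\conv(\supp \mu_0)$ stems from an ``outlier'' zero of $Q_n$ and that such outliers are $o(n)$ in number and weak-* negligible.

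In the real case, the $\sigma_{n,k_n}$ are supported on $\R$, so every weak-* accumulation point is a positive measure on $\R$ with Cauchy transform equal to $u(\cdot,t)$; the Stieltjes--Perron inversion formula then recovers $\mu_t$ uniquely and forces weak-* convergence of the full sequence. The main obstacle is the nonlinear stability estimate of the second paragraph: the Hopf flow can concentrate characteristics and develop shocks, so the key input that makes Gronwall work is that the universal $R = R(T)$ from Theorem~\ref{mainthm1} keeps $v_{n,k}$ uniformly bounded below in modulus for all $k \leq n$, preventing the Lipschitz constant of $G(w,w') = w'/w$ from blowing up and keeping the accumulated error $O(1/n)$ rather than exponentially larger. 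The global injectivity of $\Phi_t$ in the third paragraph is the other delicate point, handled via the argument principle along growing contours.
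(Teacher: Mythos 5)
Your overall architecture (compare $v_{n,k}$ with the exact Hopf solution $u_n(\cdot,k/n)$ launched from $\mathcal C^{\sigma_{n,0}}$, then pass to the limit using weak-* compactness, the moment problem, and Stieltjes--Perron in the real case) matches the paper's, but your central step --- the discrete Gronwall estimate for $e_{n,k}=v_{n,k}-u_n(\cdot,k/n)$ --- has a genuine gap. Subtracting the Euler step \eqref{eqCauchyTransfNew} from the Taylor expansion of $u_n$ in $t$ gives a recursion of the form
\begin{equation*}
e_{n,k+1}=e_{n,k}\Bigl(1-\tfrac{1}{n}\,\tfrac{\partial_z u_{n,k}}{u_{n,k}v_{n,k}}\Bigr)+\frac{1}{n}\,\frac{\partial_z e_{n,k}}{v_{n,k}}+\mathcal O(n^{-2}),
\end{equation*}
so the increment involves $\partial_z e_{n,k}$, not just $e_{n,k}$. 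Bounding $\partial_z e_{n,k}$ on $K$ by $\sup_{K'}|e_{n,k}|$ via Cauchy's formula costs a factor $1/\operatorname{dist}(K,\partial K')$ and must be paid at \emph{each} of the $\sim n$ steps: with a fixed thickening the domains exhaust $\Omega_R$ after finitely many steps, and with thickenings of size $\delta/n$ the per-step factor becomes $1+C/\delta$ and the iterate picks up $(1+C/\delta)^n$, not $e^{Ct}$. The uniform lower bound on $|v_{n,k}|$ that you invoke controls the Lipschitz constant of $(w,w')\mapsto w'/w$ in the $w$-slot only; it does nothing about this loss of a derivative in the $w'$-slot, which is the actual obstruction. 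So the claimed $O(e^{Ct}/n)$ bound does not follow as written.

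This is exactly the difficulty the paper's proof is designed to avoid: it works with the functional inverses. By \eqref{charact2} the exact flow is the rigid translation $u_{n,k+1}^{-1}(\zeta)-u_{n,k}^{-1}(\zeta)=-1/(n\zeta)$, and Proposition~\ref{lem5} shows that the discrete inverses satisfy the same relation up to an analytic error $\Delta_{n,k}$ with $|\Delta_{n,k}|\le M/n^2$ uniformly. The differences $(v_{n,k}^{-1}-u_{n,k}^{-1})$ then \emph{telescope} --- no Gronwall iteration, hence no repeated domain shrinkage --- giving $O(1/n)$ proximity of the inverses on a fixed disk $\D_{\rho'}$, which Propositions~\ref{LemmaAux1}--\ref{LemmaAux3} convert into $O(1/n)$ proximity of the direct functions on a fixed $\Omega_R$. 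If you want to salvage a direct-function argument you would have to propagate the error along the characteristics themselves (which is what inverting $u$ accomplishes), not on fixed compacts. A secondary remark: your claim of \emph{global} injectivity of $\Phi_t(s)=s-t/\mathcal C^{\mu_0}(s)$ on all of $\C\setminus\conv(\supp\mu_0)$ via the argument principle on large contours is asserted rather than proved and is more than the paper establishes (or needs); the paper only uses univalence near infinity plus weak-* compactness and analytic continuation for the support statement.
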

\begin{remark}
In the statement above, $ \conv (K)$ denotes the convex hull of the set $K$. 

With the assumptions of the theorem, let $T=(1+t)/2$, and let $R=R(T)$ be as in Theorem~\ref{mainthm1}. Then to establish Theorem~\ref{mainthm2}, it is sufficient to prove that
$$
\lim_n  \mathcal C^{  \sigma_{n,k_n}}(z) =u(z,t)
$$
locally uniformly in $\Omega_R =\{z\in \C:\, R<|z|<+\infty\}=\C \setminus \overline{\D_R} $.  Indeed, by the weak-* compactness of the sequence $\sigma_{n, k_n} $, there exists a measure $\mu_t$, with the properties indicated above, such that $\sigma_{n, k_n} \stackrel{*}{\longrightarrow} \mu_t$ along a subsequence of $\N$, and thus, \eqref{coincidenceCauchy} holds in $\Omega_R$. The rest of the assertion follows by elementary analytic continuation arguments.

As a consequence, the  Cauchy transform of the measure $\mu_t$ is completely determined in the complement of $ \conv (\supp \mu_0)$. However, in its maximum generality, this does not determine measure $\mu_t$ uniquely, which makes the statement of Theorem~\ref{mainthm2} apparently less straightforward. 
\end{remark}

Finally, we recast Theorem~\ref{mainthm2} in the terminology of free probability; see e.g.~\cite{Arizmendi21, MR4408504, MR3585560, MR2266879} for a background. Along with the Cauchy transform $\mathcal C^\mu$ of a compactly supported probability measure $\mu$, as in \eqref{def:Cauchy}, we can define its \textit{$R$-transform} implicitly by  
	\begin{equation}
	    \label{defRtransform}
 \mathcal R_\mu\left(\mathcal C^\mu(z)\right) = z-\frac{1}{\mathcal C^\mu(z)} ;
	\end{equation}
in particular, $\mathcal R_\mu$ is an analytic function in a neighborhood of the origin. For $s>0$, 
the \textit{fractional free additive convolution}  $\mu^{\boxplus s}$ of $\mu$ is defined via the identity (see \cite{ShlyakhtenkoTao21})
	\begin{equation}
	    \label{propRtransform}
	\mathcal R_{\mu^{\boxplus s}}(w) = s \mathcal R_\mu (w). 
\end{equation}
\begin{cor}  \label{thm:Steiner}
Let $Q_n$ be a sequence of polynomials as in Theorem~\ref{mainthm2}, such that \eqref{assumptionWeakConvergence} holds, where $\mu_0$ is a compactly supported probability measure on $\R$. For $t\in [0,1)$, denote
 $$
 \widehat Q_{n,k}(x) \isdef Q_{n,k}((1-t) x), 
 $$
 where we use the notation \eqref{def:QN}. 
 
Then for $k/n\to t$,
 \begin{equation} \label{limitFreeConv}
\frac{1}{n-k} \, \chi\left(\widehat  Q_{n,k} \right)  \stackrel{*}{\longrightarrow} \mu_0^{\boxplus 1/(1-t)}.
 \end{equation}
 \end{cor}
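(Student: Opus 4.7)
The argument is essentially a dictionary translation: starting from Theorem~\ref{mainthm2}, I would first identify the weak-$*$ limit of $\frac{1}{n-k}\chi(\widehat Q_{n,k})$, and then show that its $R$-transform coincides with $\mathcal R_{\mu_0}/(1-t)$, which by \eqref{propRtransform} is exactly the $R$-transform of $\mu_0^{\boxplus 1/(1-t)}$.

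For the first step, observe that the zeros of $\widehat Q_{n,k}(x)=Q_{n,k}((1-t)x)$ are the zeros of $Q_{n,k}$ scaled by $1/(1-t)$, so $\frac{1}{n-k}\chi(\widehat Q_{n,k})$ is the pushforward of $\frac{n}{n-k}\sigma_{n,k_n}$ under the dilation $y\mapsto y/(1-t)$. Under the real-zero hypothesis in the last part of Theorem~\ref{mainthm2}, $\sigma_{n,k_n}\weak\mu_t$, and hence the left-hand side of \eqref{limitFreeConv} converges weakly to the pushforward $\nu_t$ of $\mu_t/(1-t)$ under the same dilation; by \textit{(\ref{item3Mainthm})} of Theorem~\ref{mainthm1}, $\nu_t$ is a compactly supported probability measure on $\R$. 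It therefore suffices to identify $\nu_t$ with $\mu_0^{\boxplus 1/(1-t)}$.

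A direct change of variables in \eqref{def:Cauchy} gives
\[
\mathcal C^{\nu_t}(z)=\mathcal C^{\mu_t}((1-t)z)=u((1-t)z,t)
\]
in a neighborhood of infinity. Setting $w=\mathcal C^{\nu_t}(z)$ and inverting $u(\cdot,t)$, the characteristic form \eqref{charact2} of the Hopf equation yields
\[
(1-t)z=u^{-1}(w,t)=u^{-1}(w,0)-\frac{t}{w}.
\]
Substituting into \eqref{defRtransform} then gives
\[
\mathcal R_{\nu_t}(w)=z-\frac{1}{w}=\frac{1}{1-t}\left(u^{-1}(w,0)-\frac{1}{w}\right)=\frac{1}{1-t}\,\mathcal R_{\mu_0}(w),
\]
which by \eqref{propRtransform} identifies $\nu_t=\mu_0^{\boxplus 1/(1-t)}$ and establishes \eqref{limitFreeConv}.

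There is no serious obstacle beyond this computation. The one technical point is that all the inverses used must live in a common neighborhood of $0$; this is ensured by Theorem~\ref{mainthm1}, which produces $u(\cdot,t)$ analytic on a fixed annular neighborhood of infinity and behaving like $(1-t)/z$ there, uniformly for $t$ bounded away from $1$, so that $u^{-1}(\cdot,t)$ is analytic on a common disk about the origin.
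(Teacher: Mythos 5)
Your proposal is correct and follows essentially the same route as the paper: both rest on the scaling identity $\mathcal C^{\nu_t}(z)=\mathcal C^{\mu_t}((1-t)z)=u((1-t)z,t)$ together with the characteristic relation \eqref{charact2} to show that the $R$-transform of the limit equals $\frac{1}{1-t}\mathcal R_{\mu_0}$. The only difference is direction of the computation (you compute $\mathcal R_{\nu_t}$ and match it to $\frac{1}{1-t}\mathcal R_{\mu_0}$, while the paper computes $\mathcal C^{\mu_0^{\boxplus 1/(1-t)}}$ and matches it to the limit of the scaled Cauchy transforms), which is immaterial.
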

As mentioned in the Introduction, \eqref{limitFreeConv}, established heuristically in \cite{Steiner2021}, was rigorously proven in \cite{HoskinsKabluchko21}. Later, a short proof was presented in \cite{Arizmendi21}, based entirely on arguments from free probability. 
We will show that this result is an immediate consequence of solving Hopf's equation by the method of characteristics.

\section{Proof of Theorem~\ref{mainthm1} and Corollary~\ref{itemNewMainthm}} \label{sec:proofThm1}

We start by addressing the evolution in a neighborhood of infinity of the Cauchy transform \eqref{def:Cauchy} of a finite Borel \textit{signed} measure $\mu$, compactly supported on $\C$, according to the Hopf equation \eqref{pde}. 
Recall that $\mathcal C^\mu$ is a holomorphic function in $\C \setminus \supp(\mu)$, and that the series in \eqref{seriesCauchyTransform} converges locally uniformly in any $\Omega_r=\{z\in \C:\, r<|z|<+\infty\}$, disjoint from $\supp(\mu)$. We assume additionally that 
\begin{equation}
    \label{normCondition}
m_0=\mu(\C)=1,
\end{equation}
and denote by $|\mu|$ the total variation of $\mu$; since $\mu$ is signed, in general, $|\mu| (\C)\geq m_0$.

As a first step, we need some uniform estimates on $\mathcal C^\mu$:
\begin{prop}
    \label{prob:boundsCauchy}
    Let $\mu$ be a finite Borel and compactly-supported signed measure on $\C$, such that \eqref{normCondition} holds, and let  $0<r<+\infty$ be such that $\supp (\mu) \subset\D_r$. Then for any $R>  (1+|\mu|(\C))\, r$,  $\mathcal C^\mu$ is analytic in $\overline{\Omega_R}$ and does not vanish for $z\in \Omega_R$.

If in addition $\mu$ is a positive (probability) measure, then for $|z|\ge R>2r$,
    \begin{align}
        \label{boundsCauchy1}
    & \frac{2}{9}< \frac{1-r/R}{(1+r/R)^2} \le \left|  z\mathcal C^\mu(z)\right| \le \frac{R}{R-r} ,
    \\  
    & \frac{1-2r/R}{(1+r/R)^4} \le \left| z^2\, \frac{d}{dz}\mathcal C^\mu(z)\right| \le \frac{1}{(1-r/R)^2}<4.
\label{boundsCauchy2}
    \end{align}
Moreover, if $R>(\sqrt{2}+1)r$, then $C^\mu$ is univalent (injective) in  $\Omega_R$, and 
\begin{equation} \label{image}
\overline{\D_{1/(4R)}}\subset C^\mu\left( \overline{\Omega_R} \right)\subset \overline{\D_{1/(R-r)}}.
\end{equation}
\end{prop}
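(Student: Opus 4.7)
The plan is to verify each claim by elementary Laurent-series and integral estimates, with univalence following from a positivity-of-real-part argument and the Koebe $1/4$ theorem furnishing the inner image containment. Analyticity of $\mathcal C^\mu$ on $\overline{\Omega_R}$ is immediate from $\supp(\mu)\subset \D_r\subset \D_R$. For nonvanishing, I would start from the Laurent expansion \eqref{seriesCauchyTransform} and the trivial bound $|m_k|\le r^k|\mu|(\C)$ to get
\[
|z\mathcal C^\mu(z) - 1| \;\le\; \sum_{k\ge 1} \frac{|m_k|}{|z|^k} \;\le\; \frac{|\mu|(\C)\,r}{|z|-r},
\]
which is strictly less than one once $|z|\ge R>(1+|\mu|(\C))r$; in particular $\mathcal C^\mu$ is nonzero on $\Omega_R$.

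For the bounds \eqref{boundsCauchy1}--\eqref{boundsCauchy2}, I would write, for a positive probability $\mu$,
\[
z\mathcal C^\mu(z) = \int \frac{d\mu(y)}{1-y/z}, \qquad -z^2 (\mathcal C^\mu)'(z) = \int \frac{d\mu(y)}{(1-y/z)^2}.
\]
The upper estimates follow at once from $|1-y/z|\ge 1-r/R$ integrated against the probability measure. The lower estimates come from real-part bounds: setting $a=r/R$ and using $1/(1-\zeta)=\overline{(1-\zeta)}/|1-\zeta|^2$,
\[
\re \frac{1}{1-\zeta} \;\ge\; \frac{1-a}{(1+a)^2}, \qquad \re\frac{1}{(1-\zeta)^2} = \frac{(1-\re\zeta)^2-(\im\zeta)^2}{|1-\zeta|^4}\;\ge\;\frac{1-2a}{(1+a)^4},
\]
valid whenever $|\zeta|\le a$. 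Integrating and using $|w|\ge \re w$ gives the claimed lower bounds; the numerical value $2/9$ in \eqref{boundsCauchy1} is produced by plugging in the worst case $a=1/2$, corresponding to $R=2r$.

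For univalence in $\Omega_R$, I plan to use the standard identity
\[
\mathcal C^\mu(z_1)-\mathcal C^\mu(z_2) = (z_2-z_1)\int \frac{d\mu(y)}{(z_1-y)(z_2-y)},
\]
and show the remaining integral is nonzero by bounding below the real part of $z_1z_2/((z_1-y)(z_2-y))$. The key inequality
\[
\re\bigl[(1-\zeta_1)(1-\zeta_2)\bigr] \;\ge\; 1-2a-a^2
\]
for $|\zeta_i|\le a$ is strictly positive precisely when $a<\sqrt{2}-1$, that is, when $R>(\sqrt{2}+1)r$, which is where this particular threshold enters. The outer image bound $\mathcal C^\mu(\overline{\Omega_R})\subset \overline{\D_{1/(R-r)}}$ is just $|\mathcal C^\mu(z)|\le 1/(|z|-r)$.

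For the inner containment, I would pass to the disk by setting $g(w)\eqq \mathcal C^\mu(1/w)$, which is univalent on $\D_{1/R}$ with $g(0)=0$ and $g'(0)=m_0=1$ (by the Laurent expansion at infinity). The Koebe $1/4$ theorem then yields $g(\D_{1/R}) \supset \D_{1/(4R)}$, and taking closures (using continuity of $\mathcal C^\mu$ on $\overline{\Omega_R}$ in the Riemann sphere, with $\mathcal C^\mu(\infty)=0$) gives $\mathcal C^\mu(\overline{\Omega_R})\supset \overline{\D_{1/(4R)}}$. The only mildly delicate point is tracking that the threshold $R>(\sqrt{2}+1)r$ for univalence corresponds exactly to the inequality $1-2a-a^2>0$; all the other estimates are routine.
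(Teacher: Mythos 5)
Your proposal is correct, and for the analyticity, non-vanishing, the bounds \eqref{boundsCauchy1}--\eqref{boundsCauchy2}, and the Koebe argument for the inner inclusion in \eqref{image}, it follows essentially the same path as the paper (real-part lower bounds for $1/(1-y/z)$ and $1/(1-y/z)^2$, the worst case $r/R=1/2$ producing the constants $2/9$ and $4$, and the passage to the disk via $w\mapsto 1/w$ before invoking Koebe). The one place where you genuinely diverge is the univalence claim. The paper reduces to $f_R(z)=R\,\mathcal C^{\mu}(R/z)$ on the unit disk, checks $\Re\bigl(f_R(z)/z\bigr)>0$, and then cites MacGregor's theorem to get univalence in $|z|<\sqrt2-1$; you instead use the two-point identity
\[
\mathcal C^{\mu}(z_1)-\mathcal C^{\mu}(z_2)=(z_2-z_1)\int\frac{d\mu(y)}{(z_1-y)(z_2-y)}
\]
and show the integral has positive real part because $\Re\bigl[(1-\zeta_1)(1-\zeta_2)\bigr]\ge 1-2a-a^2>0$ when $a=r/R<\sqrt2-1$. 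This is a legitimate and fully self-contained replacement: it avoids the external citation, it uses the positivity of $\mu$ in exactly the same way, and it recovers the identical threshold $R>(\sqrt2+1)r$ (unsurprisingly, since MacGregor's radius $\sqrt2-1$ comes from essentially this estimate). The trade-off is that the paper's route gets univalence of $f_R$ on the full disk as a packaged consequence of a known theorem, whereas yours proves exactly what is needed by hand; either way the subsequent application of Koebe's $1/4$ theorem and the closure argument go through as you describe.
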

\begin{remark}
    As formula \eqref{examplesigned} in Remark \ref{remark22} (b) shows, for a signed measure $\mu$, $R$ in the first assertion of Proposition~\ref{prob:boundsCauchy}, in general, cannot be made independent of $|\mu|$.
\end{remark}
\begin{proof}
For an arbitrary $R>r$, function $z\, \mathcal C^\mu(z)$ is analytic in $\overline{\Omega_R}$ and can be written as
$$
z\mathcal C^\mu(z) = 1 +    \int \frac{y}{z-y}\, d\mu(t).
$$
Since for $|z|=R$,
$$
\left| \int \frac{y}{z-y}\, d\mu(t)\right|\le \frac{r}{R-r}|\mu(\C)|,
$$
if $R>r$ is such that 
 $$
	\frac{r}{R- r} |\mu|(\C)<1 \quad \text{or, equivalently,} \quad    R>r (1+|\mu|(\C)),
	$$
then by Rouch\'e's theorem, $\mathcal C^\mu(z)\ne 0$ in $\overline{\Omega_{R}}$. 
In particular, if $\mu$ is a probability measure, this is true for any $R>2r$. 

Assuming that $\mu$ is a probability measure with $\supp (\mu) \subset\D_r$, we have $ |\mu|(\C)=1$, and the bound obtained above gives us that for $|z|=R>2r$,
$$
\left| z\mathcal C^\mu(z) \right| =\left| 1 +    \int \frac{y}{z-y}\, d\mu(t)\right| \le 1 + \frac{r}{R-r}=\frac{R}{R-r}.
$$
On the other hand,
$$
\left| z \, \mathcal C^\mu(z)\right| \ge \Re \left( z\,  \mathcal C^\mu(z)\right) = \int \Re \left(\frac{1}{1-y/z} \right)\, d\mu(y),
$$
which gives us that for $|z|=R>2r$ and $|y|\le r<R$,
$$
\Re \left(\frac{1}{1-y/z} \right) =\frac{1-\Re (y/z)}{|1-y/z|^2}  \ge \frac{1-r/R}{(1+r/R)^2}>\frac{2}{9},
$$
and the bounds in \eqref{boundsCauchy1} follow. 

In a similar fashion,
$$
\left| z^2 \,\frac{d}{dz} \mathcal C^\mu(z)\right| = \left|  \int \frac{1}{(1-y/z)^2}\, d\mu(t) \right| \le  \frac{1}{(1-r/R)^2} <4,
$$
and
$$
\left| z^2 \,\frac{d}{dz}  \mathcal C^\mu(z)\right| \ge \Re \left( -z^2 \,\frac{d}{dz}  \mathcal C^\mu(z)\right) = \int \Re \frac{1}{\left(1-y/z\right)^2} \, d\mu(y).
$$
For $|z|\ge R>2r$ and $y\in\D_r$, 
\begin{align*}
\Re   \left(1-y/z\right)^2  & = 1- 2\Re \left(\frac{y}{z}\right) + \Re \left(\frac{y}{z}\right)^2 = 1- 2\Re \left(\frac{y}{z}\right) + \left(\Re 
 \frac{y}{z}\right)^2 - \left(\Im 
 \frac{y}{z}\right)^2 \\
 & = \left(1- \Re 
 \frac{y}{z}\right)^2 - \left(\Im 
 \frac{y}{z}\right)^2\ge \left(1- \Re 
 \frac{y}{z}\right)^2 -\left|
 \frac{y}{z}\right|^2\ge \left(1- \frac{r}{R}\right)^2 -\left(
 \frac{r}{R}\right)^2 \\
 & = 1-   \frac{2r}{R} >0,
\end{align*}
so that
$$
\Re   \frac{1}{\left(1-y/z\right)^2} = \frac{\Re   \left(1-y/z\right)^2}{\left|1-y/z\right|^4} \ge \frac{1-2r/R}{(1+r/R)^4},
$$
and hence,
$$
\left| z^2 \,\frac{d}{dz}  \mathcal C^\mu(z)\right| \ge  \frac{1-2r/R}{(1+r/R)^4},\qquad |z|\ge R>2r.
$$
This concludes the proof of \eqref{boundsCauchy2}.

Furthermore, for $R>r$, consider the function
\begin{equation}
    \label{defF1}
    f_R (z) \isdef R\, \mathcal C^\mu\left(\frac{R}{z}\right)= z \int \frac{1}{1-z y/R} \, d\mu(y)= z+ \mathcal O\left( z^2 \right), \quad z\to 0,
\end{equation}
defined in the unit disk $D_1=\{z\in \C:\, |z|<1\}$. 
We have that
$$
\Re \left( \frac{f_R(z)}{z} \right) = \int \Re \left( \frac{1}{1-z y/R}\right) \, d\mu(y) = \int   \frac{  1-\Re \left( z y\right)/R}{|1-z y/R|^2}  \, d\mu(y) .
$$
Since
$$
 1-\frac{\Re \left( z y\right)}{R}\ge 1-\frac{  \left| z y\right|}{R}\ge 1-\frac{  r}{R}>0,
$$
we conclude that $\Re(f_R(z)/z)>0$ in $|z|<1$, and by \cite[Theorem 3]{MR0140674}, $f_R$ is univalent in $|z|<\sqrt{2}-1$ (or, equivalently, $\mathcal C^\mu$ is univalent in $\Omega_{(\sqrt{2}+1)R}$).

Moreover, with $R>(\sqrt{2}+1)r$, by the Koebe $1/4$ theorem (see, e.g.~\cite[\S 2.2]{MR0708494}), the image $f_R(\D_1)$ of the unit disk by the function $f_R$ contains the disk $\overline{\D_{1/4}}$, or in other words, the image of $\overline{\Omega_R}$ by $\mathcal C^\mu$ contains the disk $\overline{\D_{1/(4R)}}$.
\end{proof}

It is well known that the Hopf equation \eqref{pde} can be solved by the method of characteristics. In preparation to the analysis of this solution, we prove the following auxiliary result: 
\begin{prop} \label{prop:solutionPDE}
Let $\mu$ be a finite Borel signed measure $\mu$ on $\C$, such that \eqref{normCondition} holds. 
Assume $0<r<+\infty$ is such that $\supp \mu \subset\D_r$, and let $0<T<1$ be fixed. Then there exists an $R> 2r$, depending only on the values of $T$, $|m_1|$, and $|\mu| (\C)$, such that 
\begin{equation*}
    f_1 (s) = \mathcal C^\mu\left(\frac{1}{s}\right)= \int \frac{s}{1-s y} \, d\mu(y)
\end{equation*}
is analytic in $\overline{\D_{1/R}}$, does not vanish for $0<|s|\le  1/R$, and for each $z\in \D_{1/(2R)}$ and each $t\in [0,T]$, equation 
	\begin{equation} \label{main}
		z -  s- \frac{z s t}{f_1 (s)}=0
	\end{equation}
	has a unique solution $s\in \D_{1/R}$. 
\end{prop}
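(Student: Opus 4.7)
The plan is to rewrite \eqref{main} as an equation that is analytic at $s=0$, and then apply Rouch\'e's theorem by comparing with a transparent linear model.

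First, since $\supp(\mu)\subset \D_r$, the Cauchy transform $\mathcal C^\mu$ is analytic on $\overline{\Omega_r}$, so $f_1(s)=\mathcal C^\mu(1/s)$ is analytic in $\overline{\D_{1/R}}$ for any $R>r$; expanding the geometric series gives $f_1(s)=s\,\widetilde f(s)$ with $\widetilde f(s)\isdef \int (1-sy)^{-1}d\mu(y)=\sum_{k\ge 0}m_k s^k$, in particular $\widetilde f(0)=1$. The non-vanishing of $f_1$ on $0<|s|\le 1/R$ is just a reformulation of Proposition~\ref{prob:boundsCauchy}, valid for $R>r(1+|\mu|(\C))$; equivalently, $\widetilde f$ is non-zero throughout $\overline{\D_{1/R}}$.

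Next, I would observe that $zst/f_1(s)=zt/\widetilde f(s)$ has a removable singularity at $s=0$, so multiplying \eqref{main} by $\widetilde f(s)$ gives the equivalent analytic equation
\[
H(s)\isdef (z-s)\widetilde f(s)-zt=0 \quad \text{on } \overline{\D_{1/R}}.
\]
The value $H(0)=z(1-t)$ suggests $H_0(s)\isdef z(1-t)-s$ as the natural linear model; its zero $z(1-t)$ sits in $\D_{1/(2R)}$. A one-line computation (add and subtract $z-s$) gives the clean identity $H(s)-H_0(s)=(z-s)\bigl(\widetilde f(s)-1\bigr)$.

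Finally, I would apply Rouch\'e on the circle $|s|=1/R$: the right-hand side of the identity above is of size $O(|\mu|(\C)\,r/R^2)$, using $|\widetilde f(s)-1|\le |\mu|(\C)\,(r/R)/(1-r/R)$ and $|z-s|\le 3/(2R)$, while $|H_0(s)|\ge 1/(2R)$ thanks to $|1-t|\le 1$ and $|z|\le 1/(2R)$. Choosing $R>r\bigl(1+3|\mu|(\C)\bigr)$, which exceeds $2r$ since $|\mu|(\C)\ge m_0=1$, is amply sufficient to make $|H-H_0|<|H_0|$ on the whole contour, so $H$ and $H_0$ have the same number of zeros in $\D_{1/R}$, namely one.

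The main obstacle, if any, is choosing the algebraic rearrangement that kills the apparent singularity of \eqref{main} at $s=0$ and exhibits the correct first-order behavior $s\approx z(1-t)$; once \eqref{main} is put in the form $H=0$ together with the identity $H-H_0=(z-s)(\widetilde f-1)$, the Rouch\'e estimate reduces to the elementary tail control on $\widetilde f-1$ that comes for free from $\supp(\mu)\subset\D_r$. Note that the argument does not actively use the dependence of $R$ on $T$ or $|m_1|$ permitted by the statement: the resulting $R$ depends only on $r$ and $|\mu|(\C)$, and uniformity in $t\in[0,T]$ is automatic.
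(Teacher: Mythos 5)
Your argument is correct, and it reaches the same endpoint as the paper's proof by the same core tool (Rouch\'e's theorem on the circle $|s|=1/R$), but with a genuinely different decomposition. The paper keeps equation \eqref{main} as is, writes $f_1(s)=s\bigl(1+sg(s)\bigr)$ with $g(s)=\int y(1-sy)^{-1}\,d\mu(y)$, bounds $|g|\le 1+|m_1|$, and shows that the whole term $zst/f_1(s)=zt/(1+sg(s))$ has modulus $<|z|<|z-s|$ on the contour; it then compares against the linear model $z-s$. That estimate is where both the hypothesis $t\le T<1$ and the quantity $|m_1|$ enter, which is why the paper's admissible $R$ degenerates as $T\to 1$. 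You instead clear the removable singularity by multiplying by $\widetilde f(s)=f_1(s)/s$ (legitimate, since Proposition~\ref{prob:boundsCauchy} gives $\widetilde f\ne 0$ on $\overline{\D_{1/R}}$, so no spurious zeros are created) and compare $H(s)=(z-s)\widetilde f(s)-zt$ against $H_0(s)=z(1-t)-s$, whose unique zero $z(1-t)$ visibly stays in $\D_{1/(2R)}$ for \emph{all} $t\in[0,1]$. The identity $H-H_0=(z-s)(\widetilde f-1)$ makes the perturbation independent of $t$ and controlled purely by the tail of $\widetilde f$, so your admissible radius $R>r\bigl(1+3|\mu|(\C)\bigr)$ depends only on $r$ and $|\mu|(\C)$. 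This is a cleaner and strictly stronger conclusion than the statement requires (uniformity in $t$ up to $t=1$, no dependence on $T$ or $|m_1|$), and it is consistent with the paper's later results, which show \emph{a posteriori} that for positive $\mu$ the solution extends to all of $\Omega_r$ for every $t\in[0,1)$. The only cosmetic difference is that you handle $z=0$ uniformly through $H$, whereas the paper disposes of it separately; both are fine.
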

\begin{proof}
By Proposition \ref{prob:boundsCauchy}, if $R>r (1+|\mu|(\C))$ then $f_1(z)$ is analytic and non-vanishing for $0<|z|<1/R$. The assertion about the unisolvence of the equation \eqref{main} is obviously true for $z=0$, so we assume that $z\neq 0$.

Write
\begin{equation}
    \label{f-g}
    f_1 (s)=s + s^2 g (s),
\end{equation}
with
\begin{equation}
    \label{defFG}
    g (s) \isdef \int \frac{y}{1-s y}\, d\mu(y),
\end{equation}
so that 
$$
g (0) = \int y \, d\mu(y) = m_1,
$$
the first moment of $\mu$, see \eqref{moments}. 
Notice that  
	$$
	|g (s)-g (0)| = \left|  \int \frac{y}{1-s y}\, d\mu(y) - \int y \, d\mu(y)\right|  =|s| \left|  \int \frac{y^2}{1-s y}\, d\mu(y) \right| ,
	$$
	and for $|s|=1/R$,
	$$
	|g (s)-g (0)| = \frac{1}{R}\,  \left|  \int \frac{y^2}{1-s y}\, d\mu(y) \right| \le \frac{r^2}{R- r} |\mu|(\C).
	$$
	If we take $R>r (1+|\mu|(\C))$ such that
	$$
	\frac{r^2}{R- r} |\mu|(\C)<1 \quad \text{or, equivalently,} \quad    R>r (1+r|\mu|(\C)),
	$$
	then
	\begin{equation} \label{a1}
		|g (s)|\le 1+|m_1|, \quad s\in \overline{\D_{1/R}}.
	\end{equation}
	
	Now, given $T\in (0,1)$, let $ R>\max\{ r (1+|\mu|(\C)), r (1+r|\mu|(\C))\}$ be such that 
	$$
	R>\left(1+|m_1| \right)\frac{1+T}{1-T} \quad \text{or, equivalently,} \quad  0<\frac{T}{1- ( |m_1|+1)/R}<\frac{1+T}{2}<1.
	$$
	Then for $s\in D_{1/R}$,
	$$
	0\le \frac{|g(s)|}{R } \le \frac{1+|m_1|}{R }<\frac{1-T}{1+T}<1,
	$$
	so that
	\begin{equation} \label{eq:bound1}
			\left| \frac{t}{1+ sg (s)}\right| \le \frac{T}{|1- |g (s)|/R |} \le \frac{T}{1- |g (s)|/R } < \frac{1+T}{2}<1,
	\end{equation}
 and in consequence, for $z\neq 0$ and $|s|=1/R$,
 	\begin{equation}
    \label{boundsErrorProp1}
   \left|   \frac{z st}{f_1 (s)}\right| =   \left|   \frac{z t}{1+ sg (s)}\right| < |z|.
	\end{equation}
  Thus, since for $0<|z|<1/(2R)$ and $|s|=1/R$ we have 
  $$
  |z-s|>\frac{1}{2R}>|z|,
  $$
  we can apply Rouch\'e's theorem to claim that for every $z\in \D_{1/(2R)}\setminus \{0\}$, the equation \eqref{main} has a unique solution. 
  
All assertions have been proved for any
	$$
	R >   \max \left\{    r (1+|\mu| (\C)),  r (1+r|\mu|(\C)), \left(1+|m_1| \right)\frac{1+T}{1-T}  \right\}\ge 2r,
	$$
which concludes the proof. 
\end{proof}

\begin{remark}
Under the assumption of the Proposition,  for a signed measure $\mu$, both  $|\mu| $  and $|m_1|$ can be arbitrary large even though $m_0=\mu(\C)=1$.

However, if $\mu$ is a probability measure, then $|\mu|(\C)=m_0=1$ and 
	$$
	\supp(\mu)\subset\D_r \quad \Rightarrow \quad |m_1| =\left| \int t \, d\mu(t) \right|  \le r,
	$$
showing that we can take
	\begin{equation}
	    \label{boundForR}
     R =R(T)>    \max \left\{  2r,   r(1+r)  ,  \left(1+r \right)\frac{1+T}{1-T}  \right\}, 
	\end{equation}
 which depends only on the values of $T$ and $r$. 
\end{remark}

Now we are ready to prove the first two statements of Theorem \ref{mainthm1}.

\begin{proof}[Proof of statements \textit{\eqref{item1Mainthm}} --\textit{\eqref{item2Mainthm}} of Theorem \ref{mainthm1}]

Let $0<T<1$ and $0<r<+\infty$ be fixed and take $R=R(T)>2r$, satisfying \eqref{boundForR}. Consider equation \eqref{pde} with the initial condition \eqref{pdeInitial}, where $\mu_0$ is a  probability measure supported by $\D_r$. We solve \eqref{pde} using the standard method of characteristics, see e.g.~\cite{salih2015inviscid, MR1699025}.
	
	If $z=z(t)$ is differentiable, then
	$$
	\frac{d}{dt}\,   u(z(t),t) =\frac{\partial u}{\partial t} + z'(t)  \frac{\partial u}{\partial z}.
	$$
	Comparing it with \eqref{pde} we see that if $z(t)$ satisfies
	\begin{equation} \label{identZ}
		z'(t)=-\frac{1}{u(z(t),t)},
	\end{equation}
	then for a solution $u$ of \eqref{pde}  we should have
	\begin{equation} \label{ident0}
		\frac{d}{dt} u\left( z(t), t\right)=0.
	\end{equation}
	Differentiating \eqref{identZ} with respect to $t$ again and using \eqref{ident0} we conclude that $z''(t)\equiv 0$, so that $z(t)$ must be linear in $t$, $z(t)=Lt+s$. 
	
	To find the slope $L$, we use the initial values,
	$$
	L=z^{\prime}(0)=-\frac{1}{u(z(0), 0)}=-\frac{1}{u(s,0)},
	$$
	so that $z(t)=s-t/u(s, 0)$. This is the characteristics line, along which $u(z, t)$ is constant, with the constant given by
	$u(z(0), 0)=u(s, 0)$. Thus, for $(z, t)$, the solution of \eqref{pde} is $	u(z,t) = u (s, 0)$, where $s=s(z)$ is  the solution of  
	$$ 
	z  -s+\frac{t}{u(s, 0)}=0,
	$$
	that is, of $F(z,s)=0$. Making the change of variables $z \mapsto 1/z$ and $s \mapsto 1/s$ this is equivalent to write 
	\begin{equation} \label{charact1a}
		u(1/z,t) = u (1/s, 0), 
	\end{equation}
	where $s=s(z)$ is defined implicitly by 
	\begin{equation} \label{charact1b}
		z  -s-\frac{zs t}{u(1/s, 0)}=0.
	\end{equation}
	Thus, it remains to apply Proposition~\ref{prop:solutionPDE} with 
	$	f_1(z)=u(1/z,0) $ to conclude the proof.
\end{proof}

Now we turn to the proof of the statement \textit{(\ref{item3Mainthm})} of Theorem \ref{mainthm1}. The fact that there exists a measure $\mu_t$ such that \eqref{u=Cauchy} holds will follow directly from Theorem \ref{mainthm2}.
Hence, we only need to establish the second part of the statement, on the properties of the moments $m_k(t)$. Notice that the fact that they are uniquely determined by the initial data $\{m_k(0)\}_{k\ge 0}$ is also a consequence of the uniqueness of the solution $u(z,t)$, already proved before.

\begin{proof}[Proof of statement \textit{(\ref{item3Mainthm})} of Theorem \ref{mainthm1}]
Assume that $\mu_0$ is a probability measure compactly supported in the disk $\D_r$, $0<T<1$, and take $R=R(T)>2r$, satisfying \eqref{boundForR}. The unique solution $u(z,t)$ of the equation \eqref{pde} for $t\in [0,T]$, satisfying the initial condition \eqref{pdeInitial}, has the Laurent expansion, convergent in $\Omega_R$, given by
	\begin{equation} \label{expansionatInfinity}
		u(z,t) = \sum_{k=0}^\infty \frac{m_k(t)}{z^{k+1}}, \quad |z|>R,
	\end{equation}
	with $m_0(0)=1$. 
	Replacing this expansion into \eqref{pde}, rewritten as
	\begin{equation} \label{pdeRewritten}
		\partial_z\, u =u\, \partial_t\,  u,
	\end{equation}
	we get  an infinite system of equations
	\begin{equation} \label{mn}
		\sum_{j=0}^{k} m_{k-j}(t) m'_j(t) =-(k+1) m_k(t), \quad k=0, 1, \dots,
	\end{equation}
	where we have denoted
	$$
	m'_k(t) = \frac{d}{d t}m_k(t) .
	$$
	For $k=0$, we immediately get that $m_0'(t)=-1$, so that $m_0(t)= 1-t$. For $k=1$, we obtain $m_1(t)=m_1(0)(1-t)$, so that \eqref{limitMK} is valid for $k=1$.
	
	We conclude the proof by induction, assuming that the assertion is established for $m_1(t), \dots, m_{k-1}(t)$, with $k\ge 2$. We need to show that $m_k(t)$ is a polynomial in $t$ of degree at most $k$, whose coefficients depend only on the initial moments $m_j(0)$, $0\leq j \le k$, and such that
 $$  
 m_k(1)=0, \quad -m'_k(1)=\lim_{t\to 1} \frac{m_k(t)}{1-t}= m_1(0)^k.
$$
 
Let us rewrite \eqref{mn} as
	\begin{equation} \label{differMn}
		(1-t) m_k'(t) +k \, m_k(t) =-  \sum_{j=1}^{k-1} m_{k-j}(t)  m_j'(t)=-f_k'(t),
	\end{equation}
where
	$$
	f_k(t)\isdef   \frac{1}{2}\, \sum_{j=1}^{k-1} m_{k-j}(t)  m_j(t).
	$$
 Multiplying both sides of \eqref{differMn} by the integrating factor $(1-t)^{-k-1}$ we get
	$$
	\left(  \frac{m_k(t)}{(1-t)^{k}} \right)' =-  \frac{f_k'(t)}{ (1-t)^{k+1}} , 
	$$
	so that with the account of initial conditions, 
	\begin{equation} \label{eq:mkinterm}
		m_k(t) = (1-t)^{k}m_k(0) - (1-t)^{k} \int_0^t \frac{f_k'(u)}{ (1-u)^{k+1}} \, du.
	\end{equation}

Notice that
 $$
 f_2(t)= \frac{1}{2} m_1(t)^2= \frac{1}{2} m_1(0)^2(1-t)^2, \quad f''_2(1)=m_1(0)^2,
 $$
and by the induction hypothesis, $f_k$ is a polynomial of degree at most $k$ in $t$, with 
\begin{equation}
    \label{inductionFk}
    \begin{split}
f_k(1)& =f_k'(1)=0, \\ 
f_k''(1) &=\lim_{t\to 1}\frac{2f_k(t)}{(1-t)^2} =-\lim_{t\to 1}\frac{f'_k(t)}{1-t}  = \sum_{j=1}^{k-1} m'_{k-j}(1)  m'_j(1)=(k-1)m_1(0)^k.
\end{split}
\end{equation}
Integrating in \eqref{eq:mkinterm} by parts $k$ times and using that $f^{(k+1)}\equiv 0$, we get that
\begin{align*}
		\int_0^t   \frac{f_k'(u)}{ (1-u)^{k+1}} \,  du & =\left( \sum_{j=1}^{k} \frac{(-1)^{j-1}(k-j)!}{k!}\frac{f_k^{(j)}(u)}{(1-u)^{k+1-j}  }\right)\bigg|_0^t,
	\end{align*}
	so that
	\begin{align*}
		- (1-t)^{k} \left[  \int_0^t   \frac{f_k'(u)}{ (1-u)^{k+1}} \,  du\right] & =  (1-t)^{k} d  
		+    \sum_{j=1}^{k} \frac{(-1)^{j} (k-j)!}{k!}\, f_k^{(j)}(t)\, (1-t)^{j-1},
	\end{align*}
	where $d$ is a constant:
	$$
	d=   \sum_{j=1}^{k} \frac{(-1)^{j-1} (k-j)!}{k!} \, f_k^{(j)}(0).
	$$
	Combining all in \eqref{eq:mkinterm}, we conclude that
  \begin{align*}
       m_k(t) &  = m_k(0) (1-t)^{k} +  \sum_{j=1}^{k} (-1)^{j} \frac{(k-j)!}{k!}\, \left[ f_k^{(j)}(t) - f_k^{(j)}(0)\, (1-t)^{k-j+1} \right]\, (1-t)^{j-1}\\
        &= - \frac{  1}{k}\, f'_k (t)  + \frac{ 1}{k(k-1)}\, f''_k (t)\, (1-t) + \mathcal O((1-t)^2), \quad t\to 1.
  \end{align*}
By \eqref{inductionFk}, $f'_k(1)=0$, so that $m_k(1)=0$, and
\begin{align*}
-m_k'(1) & =\lim_{t\to 1}\frac{m_k(t)}{1-t}=  \lim_{t\to 1} \left( - \frac{  1}{k}\, \frac{f'_k (t)}{1-t}  + \frac{ 1}{k(k-1)}\, f''_k (t) \right)\\
& = \frac{f_k''(1)}{k-1} =  m_1(0)^k ,
\end{align*}
which proves the assertion.
\end{proof}

\begin{proof}[Proof of Corollary~\ref{itemNewMainthm}]
  Assume that $ \mu_t $ is absolutely continuous with respect to the Lebesgue measure on $\R$ with the density
 		$$
 		f(x,t) \isdef \frac{d\mu_t(x)}{dx}.
 		$$
The Sokhotski-Plemelj formulas 
  \begin{equation} \label{sokhotsky}
		\left(\mathcal C^\mu\right)_{\pm} (x) =  \lim_{y\to 0+} \mathcal C^\mu(x\pm i y) = -\pi \left( \mathcal H_{\mu}(x) \pm i \mu'(x) \right), \quad x \in \R,
	\end{equation}
where $\mathcal H_\mu$ is the Hilbert transform 
defined in \eqref{defHillbertTransf}, imply that,  
$$
u_{\pm} (x,t) = -\pi  (1-t) \left( \mathcal H_{\mu_t}(x) \pm i f \right).
$$
Using it in \eqref{pde} and subtracting both boundary values we get the identity
$$
2\pi i \, \partial_t\, \left[(1-t) f(x,t)\right]= 2i \, \partial_x  \left( \frac{f(x,t)}{\mathcal H_{\mu_t}(x)} \right) \frac{1}{1+ \left( \frac{f(x,t)}{\mathcal H_{\mu_t}(x)} \right)^2 },
$$
or 
$$
\partial_t\, \left[(1-t) f(x,t)\right]= \frac{1}{\pi}  \partial_x\, \arctan \left( \frac{f(x,t)}{\mathcal H_{\mu_t}(x)} \right) = - \frac{1}{\pi} \partial_x\, \arctan \left( \frac{\mathcal H_{\mu_t}(x)}{f(x,t)} \right) ,
$$
which proves \eqref{nonlinearDiff}. 
\end{proof}

\section{Proof of Theorem~\ref{mainthm2} and Corollary~\ref{thm:Steiner}} \label{sec:proofThm2}

The triangular table of monic polynomials $Q_{n,k}$, $\deg Q_{n,k}=n-k$, $n\in \N$,  
$$  
Q_{n,k}(z) =  \frac{(n-k)!}{n!}\, \frac{d^k}{dz^k } Q_n(z) , \quad k\in \{0, 1,\dots, n-1\},
$$  
was introduced in \eqref{def:QN}, along with their zero-counting measures $\sigma_{n,k}$ in \eqref{eq:defSigmaNK}. 

The heuristic arguments supporting Theorem~\ref{mainthm2} were presented in Section~\ref{sec:main}, where we also introduced the functions $u_n(z,t)$, the solution of Hopf's equation \eqref{pde} with the initial condition \eqref{initialCondiU_n}. As explained, our goal is to show that 
$$
u_{n,k}(z)\isdef u(z,k/n)
$$
and $v_{n,k}$, defined in \eqref{eqCauchyTransf1},  
are asymptotically close. The solution of the Hopf equation is characterized by the identity \eqref{charact2}, which motivates us to study the behavior of their functional inverses. 

Since $v_{n,k} $ is the Cauchy transform of a measure of size $1-k/n$, by Proposition~\ref{prob:boundsCauchy}, the inverse function $v_{n,k}^{-1}(\zeta)$ is guaranteed to exist for all $(\zeta,k/n)\in \mathbb D_\rho \times [0,T]$, $0<T<1$, where $\rho = (1-T)/(4R)$, and $R>(\sqrt{2}+1)r$; moreover, $v_{n,k}^{-1}(\mathbb D_\rho)\subset \Omega_R$.

On the other hand, recall that 
$$	
u_n(z,t) = u_n (s, 0) = \mathcal C^{\sigma_{n,0}}(s) =  \frac{  Q'_{n }(s)}{n\, Q_{n }(s)},
$$
and by Proposition~\ref{prop:solutionPDE}, the inverse function $u_{n,k}^{-1}(\zeta)$ is guaranteed to exist for all $(\zeta,k/n)\in \mathbb D_{1/(4R)} \times [0,T]$, $0<T<1$, where $R>(\sqrt{2}+1)r$; also, $u_{n,k}^{-1}(\mathbb D_\rho)\subset \Omega_R$.

In summary, given $R>(\sqrt{2}+1)r$ and $0<T<1$, there exists a $0<\rho<(1-T)/(4R)$ such that we can consider the inverse functions $u_{n,k}^{-1}(\zeta )$ and $v_{n,k}^{-1}(\zeta )$, conformal mappings of $\mathbb D_\rho$ into $\Omega_R$ for every $k/n\in [0,T]$. We fix these values of $R$ and $\rho$ for the rest of the proof. 

\begin{prop}
    \label{lem5}
For the inverse functions $v_{n, k}^{-1}(\zeta)$ and $v_{n, k+1}^{-1}(\zeta)$, with $\zeta \in \D_\rho$, we have
\begin{equation}
    \label{eq:12}
v_{n,k+1}^{-1}\left(\zeta\right)-v_{n, k}^{-1}(\zeta)=-\frac{1}{n \zeta}  +\Delta_{n, k}(\zeta).
\end{equation}
There is $\rho'<\rho$ and $M>0$, depending only on $r$, such that $\Delta_{n, k}$ are analytic in $\D_{\rho'}$ and 
\begin{equation}
    \label{boundDelta}
    \left|\Delta_{n, k}(\zeta)\right| \leq \frac{M}{n^2 }, \quad \zeta \in \D_{\rho'}.
\end{equation}
\end{prop}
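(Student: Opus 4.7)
The plan is to invert the discrete Hopf equation \eqref{eqCauchyTransfNew} into an identity of the form $\phi_k(\zeta) = \phi_{k+1}(\zeta + \epsilon(\zeta))$ for an explicit small function $\epsilon$, extract a closed-form expression for $\Delta_{n,k}$ whose two summands are manifestly $O(1/n^2)$ on an annulus inside $\D_\rho$, and conclude by the maximum modulus principle.

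Write $\phi_k \isdef v_{n,k}^{-1}$. Since $\phi_k$ has a simple pole at the origin with residue $\sigma_{n,k}(\C) = (n-k)/n$, the Laurent expansion shows that $\phi_{k+1}(\zeta) - \phi_k(\zeta)$ has residue $-1/n$ at $\zeta = 0$, cancelled exactly by $-1/(n\zeta)$; hence $\Delta_{n,k}$ is analytic throughout $\D_\rho$. Substituting $z = \phi_k(\zeta)$ into \eqref{eqCauchyTransfNew} and using the inverse-function relation $v'_{n,k}(\phi_k(\zeta)) = 1/\phi'_k(\zeta)$ gives $v_{n,k+1}(\phi_k(\zeta)) = \zeta + \epsilon(\zeta)$ with
\[
\epsilon(\zeta) \isdef \frac{1}{n\,\zeta\,\phi'_k(\zeta)}, \qquad \text{so that} \qquad \epsilon(\zeta)\,\phi'_k(\zeta) \equiv \frac{1}{n\,\zeta}.
\]
Applying $\phi_{k+1}$ yields the fundamental identity $\phi_k(\zeta) = \phi_{k+1}(\zeta + \epsilon(\zeta))$. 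The uniform lower bound $|\phi'_k(\zeta)| \ge R^2/4$ on $\D_\rho$, which follows from $|z^2 v'_{n,k}(z)| \le 4$ in Proposition~\ref{prob:boundsCauchy}, shows that $\epsilon$ is analytic on $\D_\rho$ and $|\epsilon(\zeta)| \le C_1/n$ uniformly on $\D_{\rho - \eta}$ for some fixed small $\eta > 0$ and all sufficiently large $n$.

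Writing the finite increment as $\phi_{k+1}(\zeta + \epsilon) - \phi_{k+1}(\zeta) = \int_\zeta^{\zeta + \epsilon} \phi'_{k+1}(w)\,dw$ and decomposing the integrand as $\phi'_{k+1}(w) = \phi'_k(\zeta) + [\phi'_{k+1}(\zeta) - \phi'_k(\zeta)] + [\phi'_{k+1}(w) - \phi'_{k+1}(\zeta)]$, the contribution of the constant term $\phi'_k(\zeta)$ integrates to $\epsilon(\zeta)\,\phi'_k(\zeta) = 1/(n\zeta)$ exactly, and one arrives at the closed-form identity
\[
\Delta_{n,k}(\zeta) = -\epsilon(\zeta)\bigl[\phi'_{k+1}(\zeta) - \phi'_k(\zeta)\bigr] - \int_\zeta^{\zeta + \epsilon(\zeta)}\bigl[\phi'_{k+1}(w) - \phi'_{k+1}(\zeta)\bigr]\,dw.
\]
Fix $0 < \rho_1 < \rho_2 < \rho$. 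Proposition~\ref{prob:boundsCauchy} gives uniform (in $n, k$) bounds on $|\phi_{k+1}|$, $|\phi'_{k+1}|$, $|\phi''_{k+1}|$ in a neighborhood of the annulus $\rho_1 \le |\zeta| \le \rho_2$. The fundamental identity then gives $|\phi_{k+1} - \phi_k| \le |\epsilon|\sup|\phi'_{k+1}| = O(1/n)$ on a slightly larger annulus, and Cauchy's derivative estimate applied to the analytic function $\phi_{k+1} - \phi_k$ yields $|\phi'_{k+1}(\zeta) - \phi'_k(\zeta)| \le C_2/n$ throughout $\rho_1 \le |\zeta| \le \rho_2$. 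Substituting these bounds into the closed-form identity bounds the first summand by $|\epsilon| \cdot C_2/n = O(1/n^2)$ and the second by $|\epsilon|^2 \sup|\phi''_{k+1}|/2 = O(1/n^2)$.

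Since $\Delta_{n,k}$ is analytic in $\D_\rho$ and bounded by $M/n^2$ on the circle $|\zeta| = \rho_1$, the maximum modulus principle extends the bound to the entire closed disk $\overline{\D_{\rho_1}}$, and we take $\rho' = \rho_1$. The main obstacle is the exact cancellation that produces the closed-form identity for $\Delta_{n,k}$: the naive first-order Taylor estimate $\phi_{k+1}(\zeta + \epsilon) - \phi_{k+1}(\zeta) = \epsilon\,\phi'_{k+1}(\zeta) + O(\epsilon^2 \phi''_{k+1})$ does not deliver $O(1/n^2)$ uniformly on $\D_{\rho'}$ because $\phi''_{k+1}$ has a triple pole at the origin; using the exact equality $\epsilon\,\phi'_k = 1/(n\zeta)$ to replace $\phi'_{k+1}$ by $\phi'_k$ in the leading term converts the linear Taylor error into a product of two $O(1/n)$ quantities on the annulus, after which the analyticity of $\Delta_{n,k}$ at $\zeta = 0$ lets the maximum modulus principle handle the neighborhood of the origin.
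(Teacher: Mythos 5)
Your proof is correct and follows essentially the same route as the paper: the same fundamental identity $v_{n,k}^{-1}(\zeta)=v_{n,k+1}^{-1}\bigl(\zeta+\epsilon(\zeta)\bigr)$ with $\epsilon(\zeta)\,\partial_\zeta v_{n,k}^{-1}(\zeta)=1/(n\zeta)$ obtained by inverting \eqref{eqCauchyTransfNew}, the same observation that the simple pole of $v_{n,k+1}^{-1}-v_{n,k}^{-1}$ at the origin is cancelled exactly by $-1/(n\zeta)$ so that it suffices to bound $\Delta_{n,k}$ on a circle, and the same second-order error analysis. The one step where you genuinely diverge is the $O(1/n)$ bound on $\partial_\zeta v_{n,k+1}^{-1}-\partial_\zeta v_{n,k}^{-1}$: the paper feeds the discrete Hopf equation back in a second time, rewriting this difference via $v_{n,k}'-v_{n,k+1}'$ and a difference quotient (the quantities $\varepsilon_{n,k}^{(1)},\varepsilon_{n,k}^{(2)}$ of \eqref{identitiesderivativesPhi}), which yields the implicit relation \eqref{intermediate} that must then be solved for $z_{k+1}-z_k$; you instead get $|v_{n,k+1}^{-1}-v_{n,k}^{-1}|=O(1/n)$ on an annulus directly from the fundamental identity and convert it into a derivative bound by Cauchy's estimate. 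Your variant buys a fully explicit closed form for $\Delta_{n,k}$ and avoids the implicit equation, at the cost of needing the bound on a slightly larger annulus first; both are sound. One small imprecision: justifying $|\epsilon(\zeta)|\le C_1/n$ on all of $\D_{\rho-\eta}$ from the lower bound $|\phi_k'|\ge R^2/4$ alone gives only $4/(nR^2|\zeta|)$, which degenerates at the origin; since $\epsilon$ is analytic and vanishes at $\zeta=0$, the bound on the outer circle propagates inward by the maximum principle, and in any case you only use $\epsilon$ on the annulus, so nothing is lost.
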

\begin{proof}
Notice that by \eqref{estimateatinftyvnk}, 
$$
v_{n,k+1}^{-1}\left(\zeta\right)-v_{n, k}^{-1}(\zeta) +\frac{1}{n \zeta} 
$$
is analytic at $\zeta=0$, so it is enough to establish the bound \eqref{boundDelta} on a circle $|\zeta|=\rho'$.

To simplify the notation, denote $\varphi_{n, k}(\zeta)\isdef v_{n, k}^{-1}(\zeta)$ and let $^\prime$ stand for the derivative with respect to the corresponding variable that should be clear in each context: $v_{n, k}'(z)=\partial_z \,   v_{n, k}(z)$, $\varphi_{n, k}'(\zeta)=\partial_\zeta \,   v_{n, k}^{-1}(\zeta)$, etc. We also denote $z_k\isdef \varphi_{n, k}(\zeta)$ and $z_{k+1}\isdef \varphi_{n, k+1}(\zeta)$. 

Substituting $z_k=\varphi_{n, k}(\zeta)$ in \eqref{eqCauchyTransfNew} and using that $ v_{n, k}'(z_k)=1/ \varphi_{n, k}'(\zeta)$, we obtain
$$
v_{n, k+1}\left( \varphi_{n, k}(\zeta) \right)
=\zeta +\frac{1}{n \zeta} \frac{1}{\varphi_{n, k}'(\zeta) }.
$$
Applying now $\varphi_{n, k+1}=v_{n, k+1}^{-1}$ to both sides yields
$$
\varphi_{n, k}(\zeta) =\varphi_{n, k+1}\left( \zeta +\frac{1}{n \zeta} \frac{1}{\varphi_{n, k}'(\zeta) }\right),
$$
and thus,
\begin{equation}
    \label{firstidentityPhi1}
\varphi_{n, k+1}(\zeta)- \varphi_{n, k}(\zeta) =  \varphi_{n, k+1}(\zeta) - \varphi_{n, k+1}\left( \zeta +\frac{1}{n \zeta} \frac{1}{\varphi_{n, k}'(\zeta) }\right).
\end{equation}
Using that for small $s\in \C$,
\begin{equation}
    \label{taylorEx}
 \varphi_{n, k+1}\left( \zeta +s\right)= \varphi_{n, k+1}\left( \zeta \right)+ \varphi_{n, k+1}'\left( \zeta \right) s + \frac{1}{2}\varphi_{n, k+1}''\left( \zeta +\theta \right) s^2,
\end{equation}
we get that 
\begin{equation}
    \label{identityPhi1}
    \begin{split}    
  z_{k+1}-z_k & =   \varphi_{n, k+1}(\zeta)- \varphi_{n, k}(\zeta)  =  -\frac{1}{n \zeta} \frac{\varphi_{n, k+1}'\left( \zeta \right)}{\varphi_{n, k}'(\zeta) }-\frac{\delta_{n, k}(\zeta)}{n^2 \zeta^2} \\
&  = -\frac{1}{n \zeta} -\frac{1}{n \zeta} \frac{\varphi_{n, k+1}'\left( \zeta \right)-\varphi_{n, k}'\left( \zeta \right)}{\varphi_{n, k}'(\zeta) }-\frac{\delta_{n, k}(\zeta)}{n^2 \zeta^2}. 
\end{split}
\end{equation}

With the notation introduced above, let us rewrite
\begin{equation} \label{identitiesderivativesPhi}
\frac{\varphi_{n, k+1}'\left( \zeta \right)-\varphi_{n, k}'(\zeta)}{\varphi_{n, k}'(\zeta)  }  = \frac{ v_{n, k}' \left( z_k \right) - v_{n, k+1}' \left( z_{k+1} \right) }{  v_{n,k+1}'\left( z_{k+1} \right)}  =  \frac{1}{n}\, \varepsilon_{n,k}^{(1)} (\zeta) - \varepsilon_{n,k}^{(2)} (\zeta)(u-z) ,
\end{equation}
where, again by \eqref{eqCauchyTransfNew}, 
\begin{equation}
\begin{split}
\varepsilon_{n,k}^{(1)} (\zeta) & = n \, \frac{  v_{n, k}'\left( z_{k+1}\right)-  v_{n, k+1}'(z_{k+1})}{ v'_{n, k+1}(z_{k+1}) } =- \frac{1}{   v'_{n, k+1}(z_{k+1}) } \,  \left(  \frac{  v'_{n,k}(u)}{  v_{n,k}(u)}\right)\bigg|_{u=z_{k+1}} \\
& =-  \frac{1}{z_{k+1}^2 \,  v'_{n, k+1}(z_{k+1}) } \, z_{k+1}^2 \left(  \frac{  v'_{n,k}(u)}{  v_{n,k}(u)}\right)'\bigg|_{u=z_{k+1}}
\label{expressionepsilonMod}
\end{split}
\end{equation}
and
$$
\varepsilon_{n,k}^{(2)} (\zeta) = \frac{ v_{n, k}' \left( z_k \right)- v_{n,k}'\left( z_{k+1} \right)}{    z_k-z_{k+1} } \, \frac{ 1}{  v_{n,k+1}'\left( z_{k+1} \right)  }.
$$
Combining \eqref{identityPhi1} and \eqref{identitiesderivativesPhi} we get that 
\begin{equation}
    \label{intermediate}
    \left(z_{k+1}-z_k\right) \left(1- \frac{\varepsilon_{n,k}^{(2)} (\zeta)}{n \zeta} \right) = -\frac{1}{n \zeta} -\frac{\varepsilon_{n,k}^{(1)} (\zeta)}{n^2 \zeta}  -\frac{\delta_{n, k}(\zeta)}{n^2 \zeta^2}.
\end{equation}
To prove \eqref{boundDelta}, it suffices to show that $\varepsilon_{n,k}^{(1)} (\zeta)$, $\varepsilon_{n,k}^{(2)} (\zeta)$, and $\delta_{n,k} (\zeta)$ are uniformly bounded for sufficiently small $|\zeta|$ (or equivalently sufficiently large $z_k$ and $z_{k+1}$). 

By \eqref{identitiesderivativesPhi} and Proposition~\ref{prob:boundsCauchy}, $\varepsilon_{n,k}^{(1)}(\zeta)$ is analytic for $|z_{k+1}|\ge R$, and functions
$$
u\frac{ v'_{n,k}(u)}{  v_{n,k}(u)} \quad \text{and} \quad \ u^2\,   v'_{n,k+1}(u) 
$$
are analytic and non-vanishing in $|u|\ge R$. By \eqref{boundsCauchy1}--\eqref{boundsCauchy2}, for $|u|\ge R$,
$$
 \left| u\, \frac{ v'_{n,k}(u)}{  v_{n,k}(u)} \right|= \left|  \frac{u^2    v'_{n,k}(u)}{ u\,  v_{n,k}(u)} \right|\le 18,
$$
and in consequence,
$$
\left| \frac{  v'_{n,k}(u)}{  v_{n,k}(u)} \right|\le \frac{18}{R},  \qquad |u|=R.
$$
By the Cauchy integral formula,
\begin{equation}
    \label{estimate1onPhi}
\left| u^2  \left( \frac{ v'_{n,k}(u)}{  v_{n,k}(u)} \right)'\right|\le 18 (R+1)^2,  \qquad |u|=R+1.
\end{equation}
On the other hand, by \eqref{boundsCauchy2},
\begin{equation}
    \label{boundPhi}
\left| \frac{1}{ u^2\,    v'_{n, k+1}(u) } \right| \le \frac{1}{1-(k+1)/n}\frac{(1+r/R)^4}{1-2r/R}\le \frac{4(3+2 \sqrt{2})}{1-(k+1)/n}, \quad |u|\ge R\ge (\sqrt{2}+1)r. 
\end{equation}
Using these estimates in \eqref{expressionepsilonMod}, we conclude that $\left| \varepsilon_{n,k}^{(1)}(\zeta)\right|$ is uniformly bounded for $|z_{k+1}|\ge R+1$ and $k/n\le T<1$. 

As for $\varepsilon_{n,k}^{(2)} (\zeta)$, we notice that for $z_k, z_{k+1}\in \Omega_R$,
$$
\left|  \frac{ v_{n, k}' \left( z_k \right)- v_{n,k}'\left( z_{k+1} \right)}{    z_k-z_{k+1} }\right|  \leq \max_{s\in \Omega_R} \left| v_{n,k}''\left( s \right)\right| .
$$
Appealing again to \eqref{boundsCauchy1} and Cauchy's formula, we get the uniform boundedness of $\varepsilon_{n,k}^{(2)} (\zeta)$ for  $z_k, z_{k+1}\in \Omega_{R+1}$.

Finally, by \eqref{taylorEx}, $\delta_{n, k}(\zeta)$ in \eqref{identityPhi1} satisfies   
\begin{equation}
    \label{eq:13}
\left|\delta_{n, k}(\zeta)\right| \leq \frac{1}{2 }   \max _{\zeta \in \D_\rho} \frac{ \varphi_{n, k+1}'' \left(\zeta\right)}{ \left( \varphi_{n, k}' \left(\zeta\right)\right)^2 },\quad \zeta\in \D_\rho.
\end{equation}
We have
\begin{align*}
\frac{ \varphi_{n, k+1}'' \left(\zeta\right)}{ \left( \varphi_{n, k}' \left(\zeta\right)\right)^2 } 
&= - \frac{   \,v_{n, k+1}''  \left(z_{k+1} \right)}{ \left(   v_{n, k+1}' \left(z_{k+1}\right)\right)^3 } \left(  v_{n, k}' \left(z_k\right)\right)^2 
= - \frac{   v''_{n, k+1} \left(z_{k+1} \right)}{    v'_{n, k+1} \left(z_{k+1}\right)  } \left( \frac{  v'_{n, k} \left(z_k\right)}{  v'_{n, k+1} \left(z_{k+1} \right)}\right)^2 
\\
& = - \frac{   v''_{n, k+1} \left(z_{k+1} \right)}{   v'_{n, k+1} \left(z_{k+1}\right)  } \left(  1+ \frac{\varepsilon_{n,k}(\zeta)}{n}\right)^2.
\end{align*}
Reasoning as above, by \eqref{boundsCauchy1},
$$
\left|   v''_{n, k+1} \left(u \right)\right| \le   \frac{4\left( 1-(k+1)/n\right)}{R}, \quad |u|=R+1,
$$
while, by \eqref{boundPhi},
$$
\left| \frac{1}{  v'_{n, k+1}(u) } \right| \le   \frac{4(3+2 \sqrt{2}) R^2}{1-(k+1)/n}, \quad |u|\ge R\ge (\sqrt{2}+1)r,
$$
showing that $\delta_{n,k}$ in \eqref{identityPhi1} is uniformly bounded also.
\end{proof}

We are one step away from proving Theorem~\ref{mainthm2}. Since $u(z,t)$ is the solution of \eqref{pde} with the initial condition \eqref{pdeInitial}, it satisfies equation \eqref{charact2}, which implies that 
$$
\left(u_{n, k+1}^{-1}-u_{n, k}^{-1}\right)(\zeta)=-\frac{1}{n \zeta}.
$$
Combining it with \eqref{eq:12} we get that
$$
\left(v_{n,k+1}^{-1}\left(\zeta\right) - u_{n,k+1}^{-1}\left(\zeta\right) \right) - \left(v_{n, k}^{-1}(\zeta) - u_{n,k}^{-1}\left(\zeta\right)\right) = \Delta_{n, k}(\zeta)=\mathcal O \left(\frac{1}{n^2} \right), \quad |\zeta|=\rho',
$$
uniformly over $n, k \in \N$ such that $k / n<T$. Since $u_{n, 0}^{-1}=v_{n, 0}^{-1}$, we conclude that 
\begin{equation}
    \label{proximityInverses}
u_{n, k}^{-1}(\zeta)-v_{n, k}^{-1}(\zeta)=\mathcal O\left(\frac{1}{n}\right)
\end{equation}
locally uniformly in $\mathbb D_\rho$.

In order to complete the proof, it remains to show that \eqref{proximityInverses} implies proximity of the direct functions, that is, 
\begin{equation}
    \label{proximityDirect}
u_{n, k} (z)-v_{n, k} (z)=\mathcal O\left(\frac{1}{n}\right)
\end{equation}
locally uniformly in $\Omega_R$ for some $R>r$. For that, it is more convenient to work with the functions
$$
\widehat u_{n,k}(z)\isdef u_{n,k}\left(1/z \right), \quad \widehat v_{n,k}(z,t)\isdef v_{n,k}\left(1/z,t\right).
$$
holomorphic in $\overline{\D_{1/R}}$ and non-vanishing in $\overline{\D_{1/R}} \setminus \{0\}$. Also, 
$$
\widehat u_{n, k}' (0)=\widehat v_{n, k}' (0)=1-\frac{k}{n}\ge 1-T>0, 
$$
and both $\widehat u_{n, k}$ and $\widehat v_{n, k}$ are uniformly bounded in $\overline{\D_{1/R}}$: there exists $M>0$, depending on $T$ and $R$ only, such that
$$
\left| \widehat u_{n, k}(z)\right| \leq M, \quad \left| \widehat v_{n, k}(z)\right| \leq M, \qquad z\in \overline{\D_{1/R}} ,
$$
for every pair $(k, n)\in \N\cup \{0\} \times \N$, with $k/n\in [0,T]$. The assertion \eqref{proximityDirect} is a consequence of the general facts gathered in Propositions~\ref{LemmaAux1}--\ref{LemmaAux3} that, for the sake of readability, we deferred to Appendix~\ref{appendix}. This concludes the proof of Theorem~\ref{mainthm2}.

\begin{proof}[Proof of Corollary~\ref{thm:Steiner}]
    By \eqref{defRtransform}--\eqref{propRtransform},
    $$ 
	 \mathcal R_{\mu_0^{\boxplus 1/(1-t)}} (w) = \frac{1}{1-t}\mathcal R_{\mu_0 } (w)  = \frac{1}{1-t} \left[ \mathcal C^{\mu_0}\right]^{-1}(w)-\frac{1}{(1-t) w}.
 $$
  where $\left[ \mathcal C^\mu\right]^{-1}$ is the functional inverse of $ \mathcal C^\mu$.	Taking into account \eqref{charact2}, we have that
  \begin{align*}
       \mathcal R_{\mu_0^{\boxplus 1/(1-t)}} (w) &=  \frac{1}{1-t} u^{-1} (w, 0) -\frac{1}{(1-t) w} =  \frac{1}{1-t} \left( u^{-1} (w, t) +\frac{t}{w}\right) -\frac{1}{(1-t) w} \\ &= \frac{1}{1-t}   u^{-1} (w, t)-\frac{1}{  w} ,
  \end{align*}
so that
$$
\left[ \mathcal C^{\mu_0^{\boxplus 1/(1-t)}}\right]^{-1}(w) = \frac{1}{1-t}   u^{-1} (w, t) ,
$$
or, equivalently,
\begin{equation}
    \label{identityzerosscaled}
\mathcal C^{\mu_0^{\boxplus 1/(1-t)}}\left(z \right)= \mathcal C^{\mu_t}((1-t) z)=u((1-t)z,t) .
\end{equation}
On the other hand, for  
$$   
\widehat \sigma_{n,k} \isdef \frac{1}{n-k} \, \chi\left(\widehat  Q_{n,k} \right)  
$$
we have that
$$
\mathcal C^{\widehat \sigma_{n,k} }\left(z \right)=  \frac{1}{n-k} \frac{ \partial_z\,  \widehat Q_{n,k}(z)}{\widehat Q_{n,k}(z)} = (1-t) \frac{1}{n-k} \frac{  Q'_{n,k}(z)}{ Q_{n,k}(z)} \left((1-t)z \right) = (1-t) \frac{n}{n-k} \mathcal C^{  \sigma_{n,k} }\left((1-t)z \right),
$$
with $ \sigma_{n,k}$ defined in \eqref{eq:defSigmaNK}. Hence, for $k/n\to t$, by Theorem~\ref{mainthm2},  
$$
\lim_{n }\mathcal C^{\widehat \sigma_{n,k} }\left(z \right)= \mathcal C^{  \mu_t }\left((1-t)z \right),
$$
so that by \eqref{identityzerosscaled},
   $$
\lim_{n }\mathcal C^{\widehat \sigma_{n,k} }\left(z \right)= \mathcal C^{\mu_0^{\boxplus 1/(1-t)}}\left(z \right).
$$
For measures on the real line, this is equivalent to \eqref{limitFreeConv}.
\end{proof}

\section{Some examples} \label{sec:examples}

We finish our exposition by presenting a few simple examples, starting with some paradigmatic zero distributions on the real line. 

It is well known that under very mild assumptions on the orthogonality measure on the interval $[-1,1]$ (such as having a strictly positive Radon-Nikodym derivative with respect to the Lebesgue measure almost everywhere on $[-1,1]$, see, e.g. \cite{StTo}), the limit zero distribution of the corresponding orthogonal polynomials is the the equilibrium measure (or arc-sine distribution) of the interval $[-1,1]$, that is,
$$
d\mu_0(x)=\frac{1}{\pi \sqrt{1-x^2}}\, dx, \quad x\in (-1,1),
$$
for which 
$$
u(z,0)=\mathcal C^{\mu_0}(z)=\frac{1}{\sqrt{z^2-1}} =\frac{1}{z} + \mathcal O\left(\frac{1}{z^2} \right), \quad z\in \C\setminus [-1,1],
$$
and consequently,
$$
u^{-1} (w, 0)=\frac{\sqrt{w^2+1}}{w}.
$$
By \eqref{charact2}, solving the equation
$$
z= \frac{\sqrt{w^2+1}-t}{w}
$$
we get that
$$
w=u(z,t) = \frac{-tz + \sqrt{z^2-(1-t^2)}}{z^2-1} . 
$$
Since the residues of the right hand side at $\pm 1$ are $0$, we conclude that $\mu_t$ is absolutely continuous and
	\begin{equation}\label{eqArcsine}
d\mu_t(x) = \frac{1}{\pi} \frac{\sqrt{1-t^2-x^2}}{1-x^2}\, dx, \qquad x\in [-\sqrt{1-t^2}, \sqrt{1-t^2}],
	\end{equation}
where we have used the Stieltjes--Perron inversion formula. 

\medskip

Another notable case is the semicircle distribution on the interval $[-1,1]$,
	$$
	d\mu_0(x)=\frac{2}{\pi }\, \sqrt{1-x^2}\, dx, \quad x\in (-1,1), 
	$$
corresponding, for instance, to the zero limit distribution of the appropriately rescaled Hermite polynomials. Since
	$$
	u(z,0)=2 \left( z-\sqrt{z^2-1}\right)  =\frac{1}{z} + \mathcal O\left(\frac{1}{z^2} \right), \quad z\in \C\setminus [-1,1],
	$$
	and
	$$
	u^{-1} (w, 0)=\frac{ w^2+4}{4w},
	$$
solving the equation
	$$
	z= \frac{ w^2+4-4t}{4w},
	$$
	we get that
	$$
w=	u(z,t) = 2 \left( z-\sqrt{z^2-(1-t)}\right) .
	$$
	Again, $\mu_t$ is absolutely continuous probability measure  given by
	\begin{equation}\label{eqSemicircle}
		d\mu_t(x) = \frac{2}{\pi }  \sqrt{1-t-x^2} \, dx, \quad x\in [-\sqrt{1-t}, \sqrt{1-t}],
	\end{equation}
the rescaled semicircle distribution for all values of $t\in [0,1)$.

\medskip

The case of the Marchenko-Pastur distribution on $[0,4]$, given by
$$
d\mu_0(x)=\frac{1}{2\pi} \, \frac{\sqrt{x(4-x)}}{x}\, dx,
$$
that describes the asymptotic behavior of singular values of large random matrices (or equivalently, the asymptotic zero distribution of rescaled Laguerre polynomials) is analyzed in a similar way.

\medskip

On the other hand, a natural case to consider is of the normalized uniform distribution on $[-1,1]$. It turns out it is remarkably challenging; indeed, in this case
	$$
	u(z,0)=\frac{1}{2}\log \left( \frac{z+1}{z-1}\right) , \quad z\in \C\setminus [-1,1],
	$$
	and
	$$
	u^{-1} (w, 0)=\frac{ v+1}{v-1}, \quad v=e^{2w}.
	$$
However, the equation\begin{equation} \label{lebesgue}
	z= \frac{ v+1}{v-1} - \frac{t}{\log v}, \quad v=e^{2w}
	\end{equation}
is highly transcendental and does not have an explicit solution. We can use this representation to find the support of the measure $\mu_t$, which again appears to be a formidable task.

\medskip

Let us end with the case where $\mu_0$ is discrete and is the normalized zero-counting measure of a given polynomial $P$, of degree $m$, with real or complex zeros. This is the case, for instance, addressed at the beginning, when $Q_n$ are generated by a Rodrigues-type formula. As mentioned, this situation has been studied before in \cite{ShapiroRodrigues} using different techniques. 

Now
 $$
 u(z, 0)=\frac{1}{m}\, \frac{d}{d z} \log P(z),
 $$
and equation  \eqref{Shapiro} reduces to\footnote{\, This equation appears in \cite[formula (1.6)]{ShapiroRodrigues} in the situation that is a particular case of a discrete measure $\mu_0 $.}
 $$
m P(z+t v)-v P^{\prime}(z+t v)=0, \quad v=\frac{1}{w} .
 $$
 Using that
 $$
 P(z+s)=\sum_{j=0}^m \frac{P^{(j)}(z)}{j !} s^j
 $$
 this yields an algebraic equation for $w=u(z,t)$:
 \begin{equation}\label{eqShapiro}
 \sum_{j=0}^m  \left(m-\frac{j}{t}\right) \frac{P^{(j)}(z)}{j !} \, t^j w^{m-j}=0 
 \end{equation}
 also derived in \cite{ShapiroRodrigues}.

For instance, in our first example \eqref{eq:Rodrigues}, when $P(z)=z^2-1$ and $m=2$, 
	$$
	u(z,0)= \frac{z }{z^2-1}, 
	$$
	and \eqref{eqShapiro} yields the quadratic equation for $w=u(z,t)$:
	$$
	(z^2-1)  w^2- (1-2t) z w -   (1-t) t=0. 
	$$
Thus,
	$$
	u(z,t) = \frac{(1-2t) z -\sqrt{4 (t-1) t+z^2} }{2 \left(z^2-1\right)},
	$$
and we get the expressions \eqref{example1a}--\eqref{example1b} for measure $\mu_t$. 

\begin{remark} \label{remarkBound}
Ravichandran \cite{MR4130852} (see also \cite{MR4279350}) proved that for $t\ge 1/2$, the ratio of the diameters of the sets of zeros of polynomials $Q_{n,k}(x) $, defined in \eqref{def:QN}, and $Q_n$, for $k=[nt]$ is bounded above by $2\sqrt{t(1-t)}$. Obviously, both \eqref{eqArcsine} and \eqref{eqSemicircle} are in agreement with this bound, and formulas \eqref{example1a}--\eqref{example1b} show that the assumption that $t\ge 1/2$ cannot be omitted. 
\end{remark}

On the other hand, in Kalyagin's example \eqref{Kalyaginexample}, when $P(z)=z(z^2-1)$ and $m=3$, 
\eqref{eqShapiro} yields the following cubic equation for $w=u(z,t)$:
$$
3  z \left(z^2-1\right) w^3+(3 t-1) \left(3 z^2-1\right)w^2+3 t (3 t-2) z w  -3t^2 (1-t) =0.
$$
For $t\in (0,1)$, the end-points of the support of $\mu_t$ can be found from the branch points of $w$, which are solutions of the bi-quadratic equation
$$
9   z^4 + 3 \left(9 t^2-6
   t-2\right) z^2-(1-t) (3 t-1)^3=0,
   $$
see \cite{kaliaguine:1981}.

Finally, if $P$ has complex roots, such as $P(z)=z^3-1$,  \eqref{eqShapiro} yields again a cubic equation for $w=u(z,t)$:
 $$
 (z^3-1) w^3 - (1-3t) z^2 w^2 - (2-3t) t z w -(1-t) t^2=0,
 $$
whose branch points are solutions of the equation
 $$
 27 t^3-54 t^2+27 t-4 z^3=0,
 $$
 that is, the three cube roots
 $$
 z=    3 \sqrt[3]{\frac{t (1-t)^2}{4}}. 
$$ 
  
\appendix

\section{Auxiliary results} \label{appendix}

Denote by $\mathcal H(R, M, c)$ the class of functions $f$ satisfying the following conditions:
\begin{enumerate}[(i)]
    \item $f$ is analytic in the closed disc $\overline{ \D_R}$;
\item $|f(z)| \leq M$ for $  z \in \overline{ \D_R}$;
\item $f'(0) =c > 0$.
\end{enumerate}

\begin{prop}
    \label{LemmaAux1}
For $R, M, c>0$, let 
\begin{equation}\label{r1}
r_1 \isdef \min\left\{\frac R2, \, \frac{cR^2}{4 M}\right\}.
\end{equation}
Then for $r \leq r_1$, any function $f\in \mathcal H(R, M, c)$ is univalent in $\overline{\D_r}$, 
$f(\overline{\D_r})$ covers the disc $\overline{\D_\rho}$ with $\rho = cr/2$.  

Consequently, $f^{-1}$ is univalent in $\overline{\D_{\rho}}$ and $f^{-1}(\overline{D_{\rho}})\subset \D_r$.
\end{prop}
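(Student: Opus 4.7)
My plan is to realize $f$ as a small holomorphic perturbation of its linear model $z\mapsto cz$ near the origin, and then to derive both the univalence and the covering property by Rouché's theorem. Throughout I would assume $f(0)=0$, as is the case in the application of this lemma to the functions $\widehat u_{n,k}$ and $\widehat v_{n,k}$ in the proof of Theorem~\ref{mainthm2}; otherwise the disc $\overline{\D_\rho}$ in the conclusion would have to be re-centred at $f(0)$.

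First, I would obtain a Schwarz-type bound on the remainder $F(z)\isdef f(z)-cz$. Since $F(0)=F'(0)=0$, the quotient $F(z)/z^2$ is holomorphic in $\D_R$. Cauchy's estimate gives $c=f'(0)\le M/R$, hence on $\partial\D_R$ one has $|F|\le |f|+cR\le 2M$, and the maximum principle yields
\[
|F(z)|\le \frac{2M|z|^2}{R^2},\qquad z\in\overline{\D_R}.
\]

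Next, I would run a Rouché comparison on the circle $|z|=r$ with $r\le r_1$. The condition $r\le cR^2/(4M)$ turns the bound above into $|F(z)|\le cr/2$ on $|z|=r$. For any $w\in\D_{\rho}$ with $\rho=cr/2$, the strict inequality
\[
|cz-w|\ge cr-|w|>\tfrac{cr}{2}\ge |F(z)| \quad\text{on } |z|=r
\]
allows Rouché, applied to $cz-w$ and $f(z)-w=(cz-w)+F(z)$, to conclude that these two functions share the same (single) zero in $\D_r$. Simultaneously this yields the covering $f(\overline{\D_r})\supset\overline{\D_{\rho}}$, the uniqueness in $\D_r$ of the pre-image of each $w\in\overline{\D_{\rho}}$, and, from the strict bound $|f(z)|\ge\rho$ on $|z|=r$, the inclusion $f^{-1}(\overline{\D_{\rho}})\subset\D_r$.

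To upgrade uniqueness of pre-images to genuine univalence of $f$ on $\overline{\D_r}$ I would invoke the Noshiro--Warschawski criterion: the Cauchy integral formula for $F'$ on a circle of radius $r$ around any point of $\overline{\D_r}$ (which still sits inside $\D_R$, thanks to $r\le R/2$) turns the Schwarz bound above into $|F'(z)|\le 8Mr/R^2$ on $\overline{\D_r}$, and whenever this is strictly less than $c$ the derivative $f'=c+F'$ stays in the open half-plane $\{\Re\zeta>0\}$, forcing injectivity on the convex disc. The main technical obstacle I expect is the alignment of the constants: the straightforward $F'$ estimate only yields univalence in the slightly smaller range $r\le cR^2/(8M)$, while the covering is already valid at $r\le cR^2/(4M)$. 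I would close this gap either by taking the smaller constant in the definition of $r_1$ (harmless for subsequent arguments, which only use the qualitative content of the lemma) or by a sharper Schwarz--Pick-type estimate on $F'$ exploiting $F'(0)=0$ more efficiently.
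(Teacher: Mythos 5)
Your argument is correct in substance and, for the covering part, is essentially the paper's: both write $f(z)=cz+F(z)$ (the paper calls your $F$ by $h$ and bounds it via Cauchy's coefficient inequalities rather than via the maximum principle applied to $F(z)/z^2$, but the resulting estimate $|F(z)|\le 2M(r/R)^2$ on $|z|=r$ is the same), and both compare $f(z)-w$ with $cz-w$ by Rouch\'e to get exactly one preimage of each $w\in\D_\rho$. Note that, like the paper, you need $f(0)=0$: the paper assumes it implicitly by writing $h(z)=\sum_{n\ge 2}c_nz^n$, and it holds in the application since $\widehat u_{n,k}(0)=\widehat v_{n,k}(0)=0$. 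Where you genuinely diverge is the upgrade to univalence of $f$ on all of $\overline{\D_r}$ (not just of $f^{-1}$ on $\overline{\D_\rho}$): the paper argues through the winding number of $f(\partial\D_r)$, a curve lying in the annulus $\rho\le|w|\le 3\rho$ with index one about the origin, and the constancy of the index on components of the complement; you instead invoke the Noshiro--Warschawski criterion via $|F'|\le 8Mr/R^2<c$. Your route loses a factor of $2$ in the constant (univalence for $r\le cR^2/(8M)$ rather than $cR^2/(4M)$), which you correctly flag; the fix of shrinking $r_1$ accordingly is harmless downstream, since Proposition~\ref{LemmaAux2} already works with the smaller radius $r_0=\min\{R/2,\,cR^2/(16M)\}$ and Proposition~\ref{LemmaAux3} shrinks $\rho$ further. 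In exchange, the derivative criterion gives genuine injectivity on the convex disc, whereas the paper's index argument, as written, only directly yields multiplicity one for values of $f$ lying in the component of $\C\setminus f(\partial\D_r)$ containing the origin; so your more pedestrian bound is arguably the more airtight way to close this step.
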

\begin{proof}
Let $f(z) = cz + h(z)\in \mathcal H(R, M, c)$, $h(z) = \sum_{ n =2}^\infty c_n z^n$. Since $h$ is holomorphic in the closed disc $\overline{ \D_R}$, the Cauchy unequalities imply that $|c_n| \leq  M / R^n$, $n =2, 3, \dots$, so that for any $r < R$, 
$$ 
|h(z)| \leq M \sum_{ n =2}^\infty (r/R)^n =  M \frac {(r/R)^2}{1 - (r/R)}, \quad z \in \D_r.
$$
If $r\leq   R/2$ then $1 - r/R \geq 1/2$, and 
$$
|h(z)| \leq 2 M (r/R)^2, \quad  z \in \D_r.
$$
Also,
$$
r \leq  \frac {cR^2}{4 M} \quad \Leftrightarrow \quad  cr \geq 4  M \left(\frac rR\right)^2 .
$$
This means that if we assume  $r \leq r_1$, it implies that for $z \in C_r\isdef \partial \D_r=\{z\in \C:\, |z|=r\}$, $|cz| \geq 2 |h(z)|>0$ . This means that the image of the circle $C_r$ by $f$ lies in the annulus $ cr/2=\rho  \leq |z| \leq 3\rho$ and, by Rouch\'e's theorem, its index with respect to the origin is one. The index of $f(C_r)$ with respect to any point $f(a)\notin f(C_r)$ is constant in any connected component of the complement to $f(C_r)$, and is equal to the multiplicity of $f(a)$ in $D_r$. We conclude that this index is $1$ for any $a\in \D_r$, concluding that $f$ is univalent in $\D_r$. 
\end{proof}

\begin{prop}
	\label{LemmaAux2} 
	Let   $f \in \mathcal H(R, M, c)$ and
\begin{equation}\label{1}
r_0 \isdef  \min\left\{\frac R2, \, \frac{cR^2}{16 M}\right\}\le r_1
\end{equation}
where $r_1$ was defined in \eqref{r1}. 

Then for any $r \leq r_0$ we have $|f'(z)| \geq c/2 $ for $z \in \D_r$.
\end{prop}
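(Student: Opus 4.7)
The plan is to mimic closely the argument used in Proposition~\ref{LemmaAux1}, but applied to $f'$ rather than to $f$ itself. The key observation is that by Cauchy's inequalities, control of $f$ on $\overline{\D_R}$ yields control of all Taylor coefficients of $f$, and hence control of $f'-c$ on small discs.

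First, expand $f(z)=f(0)+cz+\sum_{n=2}^{\infty}c_n z^n$. Since $f\in\mathcal H(R,M,c)$, the standard Cauchy estimates give $|c_n|\le M/R^n$ for $n\ge 2$. Differentiating term by term, for $z\in\D_r$ we get
\begin{equation*}
|f'(z)-c|\;\le\;\sum_{n=2}^{\infty} n |c_n| r^{n-1}\;\le\;\frac{M}{R}\sum_{n=2}^{\infty} n (r/R)^{n-1}.
\end{equation*}

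Next, I bound the tail series. Setting $x=r/R$, summing the geometric series and differentiating, one obtains $\sum_{n=2}^\infty n x^{n-1}=\frac{2x-x^2}{(1-x)^2}\le \frac{2x}{(1-x)^2}$. The first hypothesis $r\le R/2$ forces $x\le 1/2$, hence $(1-x)^2\ge 1/4$, so the tail is bounded by $8x=8r/R$. Plugging this back gives the clean estimate
\begin{equation*}
|f'(z)-c|\;\le\;\frac{8Mr}{R^2},\qquad z\in\D_r.
\end{equation*}

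Finally, the second hypothesis $r\le cR^2/(16M)$ (from the definition of $r_0$) translates exactly into $8Mr/R^2\le c/2$, so the reverse triangle inequality yields $|f'(z)|\ge c-c/2=c/2$ on $\D_r$, as required. The inclusion $r_0\le r_1$ is immediate from $cR^2/(16M)\le cR^2/(4M)$, so the hypothesis of Proposition~\ref{LemmaAux1} is automatically satisfied and the univalence conclusion there carries over.

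There is no real obstacle here; the only thing to watch is the estimate of the series $\sum n x^{n-1}$ on $x\le 1/2$, where slackening $(1-x)^2\ge 1/4$ introduces the factor $8$ that in turn explains the $16M$ in the denominator of $r_0$ (versus the $4M$ in $r_1$). This matches the gap between requiring only $|f(z)-cz|\le |cz|/2$ on a circle (for Rouch\'e in Proposition~\ref{LemmaAux1}) and requiring $|f'(z)-c|\le c/2$ pointwise here.
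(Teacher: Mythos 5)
Your proof is correct and follows essentially the same route as the paper: Cauchy coefficient estimates for the Taylor coefficients of $f$ on $\overline{\D_R}$, a geometric-series bound for $f'-c$ on $\D_r$ using $r\le R/2$, and the condition $r\le cR^2/(16M)$ to force $|f'(z)-c|\le c/2$. Your direct bound $|f'(z)-c|\le 8Mr/R^2$ is in fact a slightly cleaner packaging of the paper's intermediate estimate $2Mr/(R-r)^2$, so nothing further is needed.
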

\begin{proof}
Since $f'(z) = c + h'(z)$, $h(z) = \sum_{ n =2}^\infty c_n z^n$, the Cauchy inequalities imply again that  for $z \in \D_r$,
\begin{align*}
|h'(z)| & \leq \left|  \sum_{ n =2}^\infty n c_n z^{n-1}\right| \leq M  \sum_{ n =2}^\infty n \frac{r^{n-1}}{R^n} =
\frac{Mr}{R^2} \sum_{ n = 1}^\infty (n+1)\left(\frac rR\right)^{n -1} \\
& \leq \frac{2Mr}{R^2}\sum_{ n = 1}^\infty n\left(\frac rR\right)^{n -1} 
= \frac{2Mr}{R^2} \frac 1{(1-r/R)^2} = \frac {2Mr}{(R-r)^2}.
\end{align*}
If $r \leq R/2$, then $|h'(z)| \leq 2 Mr/r^2=2M/r$ for $ z \in \D_r$. On the other hand, if additionally $r\le cR^2/(16 M) $ then  $c \geq 16 Mr/ R^2\ge 4M/r$, which implies that for $ z \in \D_r$,
$$ 
|f'(z)| \geq |c + h'(z)| \geq c - |h'(z)| \geq c - 2M/r \geq c - 8Mr/R^2 \geq c/2.
$$
\end{proof}

\begin{prop}
    \label{LemmaAux3}
    Let $f,g \in \mathcal H(R, M, c)$ and for some $\epsilon>0$, $|f(z)-g(z)|<\epsilon$ for $z\in \D_R$. Then there exist positive $\rho$ and $C$, depending only on $R$, $M$, and $c$, such that the inverse functions $f^{-1}$ and $g^{-1}$ exist in $D_\rho$, and
\begin{equation} \label{finalinequ}
 m(\rho)\isdef \max_{\zeta\in \overline{\D_\rho}} \left| f^{-1}(\zeta) - g^{-1}(\zeta)\right| \le C\epsilon. 
\end{equation}
\end{prop}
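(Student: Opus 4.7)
The plan is to combine Propositions~\ref{LemmaAux1} and~\ref{LemmaAux2} to obtain a common disk on which both $f^{-1}$ and $g^{-1}$ are defined and globally Lipschitz, and then transfer the hypothesis $|f-g|<\epsilon$ to a bound on the inverses via a one-line identity of the form $f^{-1}(\zeta)-g^{-1}(\zeta)=f^{-1}(\zeta)-f^{-1}(f(g^{-1}(\zeta)))$.

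Concretely, first fix $r\le r_0$ as in \eqref{1} and set $\rho_0 \isdef cr/2$. Proposition~\ref{LemmaAux1} applied to both $f$ and $g$ produces univalent inverses $f^{-1},g^{-1}$ on $\overline{\D_{\rho_0}}$, with values in $\D_r$, while Proposition~\ref{LemmaAux2} gives $|f'|,|g'|\ge c/2$ throughout $\D_r$. Differentiating the identity $f(f^{-1}(\zeta))=\zeta$ (and analogously for $g$) yields
$$
\bigl|(f^{-1})'(\zeta)\bigr|,\ \bigl|(g^{-1})'(\zeta)\bigr|\ \le\ \frac{2}{c},\qquad \zeta\in\D_{\rho_0}.
$$
Since $\D_{\rho_0}$ is convex, integrating along line segments shows that $f^{-1}$ and $g^{-1}$ are Lipschitz on $\D_{\rho_0}$ with constant $2/c$.

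For the comparison, set $\rho\isdef \rho_0/2$ and let $\zeta\in\overline{\D_\rho}$. Put $w\isdef g^{-1}(\zeta)\in\D_r$ and $\eta\isdef f(w)$. By hypothesis
$$
|\eta-\zeta|\ =\ |f(w)-g(w)|\ <\ \epsilon,
$$
so provided $\epsilon\le\rho_0/2$ we have $\eta\in\D_{\rho_0}$, and then $f^{-1}(\eta)=w=g^{-1}(\zeta)$. The Lipschitz bound gives
$$
\bigl|f^{-1}(\zeta)-g^{-1}(\zeta)\bigr|\ =\ \bigl|f^{-1}(\zeta)-f^{-1}(\eta)\bigr|\ \le\ \frac{2}{c}\,|\zeta-\eta|\ <\ \frac{2\epsilon}{c}.
$$
For $\epsilon>\rho_0/2$ the trivial estimate $|f^{-1}(\zeta)-g^{-1}(\zeta)|\le 2r$ is absorbed into $C\epsilon$ by choosing $C\isdef \max\{2/c,\,4r/\rho_0\}$, which depends only on $R,M,c$. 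Taking the maximum over $\zeta\in\overline{\D_\rho}$ yields \eqref{finalinequ}.

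The only subtlety, and what counts as the main (minor) obstacle, is making the domains of $f^{-1}$ and $g^{-1}$ match so that the auxiliary point $\eta=f(g^{-1}(\zeta))$ is guaranteed to lie in the domain of $f^{-1}$; this is what forces the mild shrinking from $\rho_0$ to $\rho$ and the elementary case split on the size of $\epsilon$.
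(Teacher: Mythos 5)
Your proof is correct, but it takes a genuinely different route from the paper. The paper bounds $m(\rho)$ by writing $f^{-1}-g^{-1}$ as an integral of $(f^{-1})'-(g^{-1})'$, splitting $g'(g^{-1}(w))-f'(f^{-1}(w))$ into a piece controlled by $\epsilon$ (via Cauchy estimates on $f'-g'$) and a piece controlled by $m(\rho)$ itself (via a bound on $f''$), arriving at a self-improving inequality $m(\rho)\le \frac{4\rho}{c^2}\bigl(2\epsilon+\frac{2M}{R^2}m(\rho)\bigr)$ and then absorbing the $m(\rho)$ term by shrinking $\rho$. You instead use the composition identity $g^{-1}(\zeta)=f^{-1}\bigl(f(g^{-1}(\zeta))\bigr)$, observe that the argument $\eta=f(g^{-1}(\zeta))$ is an $\epsilon$-perturbation of $\zeta$, and conclude via the Lipschitz bound $|(f^{-1})'|\le 2/c$ supplied by Propositions~\ref{LemmaAux1} and~\ref{LemmaAux2}. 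Your argument is more elementary: it needs no second-derivative estimate and no absorption step, and it yields the cleaner constant $C=8/c$. The price is the domain-matching issue you correctly identify (shrinking from $\rho_0$ to $\rho_0/2$ so that $\eta$ stays in the domain of $f^{-1}$) and the resulting case split on the size of $\epsilon$, both of which you handle properly; the identification $f^{-1}(\eta)=g^{-1}(\zeta)$ is legitimate because $g^{-1}(\zeta)\in\D_r$ and $f$ is univalent there. Both proofs establish the statement with constants depending only on $R$, $M$, $c$.
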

\begin{proof}
Fix $r_0 $ defined in \eqref{1} and consider $r \in (0, r_0) $, as long as the corresponding $\rho = cr/2$.  By Proposition~\ref{LemmaAux1}, for any such $r$ and $\rho$, functions $f^{-1}(\zeta)$, $g^{-1}(\zeta)$ exist, are  univalent in $\overline{\D_{\rho}}$ and map $\overline{D_{\rho}}$ into $  \D_r$. Thus, for any $\rho \leq c/2$ and $\zeta \in \D_\rho$,
\begin{equation}\label{3}
	\begin{split}
|f^{-1}-g^{-1}|(\zeta)  & = \left|\int_0^\zeta \left[ (f^{-1})'(w) - (g^{-1})'(w)\right] dw \right| =  
\int_0^\zeta  \left |\frac 1 { f'(f^{-1}(w))} - \frac 1 { g'(g^{-1}(w))}\right | |dw | 
\\
& \le  \rho \max_{w \in \D_{\rho}}\left |\frac{ g'(g^{-1}(w) )-  f'(f^{-1}(w))} { g'(g^{-1}(w))  f'(f^{-1}(w))}  \right| \leq
    \frac {4 \rho }{c^2} \max_{w \in \D_{\rho}}\left | g'(g^{-1}(w)) - f'(f^{-1}(w)) \right| ,
    	\end{split}
\end{equation}
where we have used Proposition~\ref{LemmaAux2}. For $w \in \D_\rho$,
$$
g'(g^{-1}(w) )- f'(f^{-1}(w) )= \left( g'(g^{-1}(w)) - f'(g^{-1}(w) )\right) + \left ( f'(g^{-1}(w)) - f'(f^{-1}(w)) \right) 
$$
and thus,    
\begin{equation}\label{4}
\left | g'(g^{-1}(w) )- f'(f^{-1}(w)) \right| \leq \left | (g' - f') (g^{-1}(w)) \right| + \left | f'(g^{-1}(w)) - f'(f^{-1}(w)) \right| .
\end{equation}

Observe that by the Cauchy integral formula, for  $f\in \mathcal H(R, M, c)$ and $z\in \D_r$, with any $r < R$, we have  $|f'(z)| \leq M/(R - r)$ and $|2 f''(z)| \leq M/(R - r)^2$. 
Thus, since for $w \in \D_\rho$, $ z=g^{-1}(w) \in \D_r$, we have
$$
 \left | g'(g^{-1}(w) )- f'(g^{-1}(w)) \right| = \left | (g' - f')(z) \right| \leq \frac \epsilon{R-r} \leq 2 \epsilon
$$
and 
$$
\left | f'(g^{-1}(w)) - f'(f^{-1}(w)) \right| \leq \max_{z \in \D_r} |f''(z)|  \max_{w \in \D_\rho}\left| g^{-1}(w) - f^{-1}(w) \right|\le \frac{M}{2 (R-r)^2 }\, m(\rho)\le \frac{2M}{ R ^2 }\, m(\rho).
$$
Combining these bounds in \eqref{4} we obtain from \eqref{3}
$$
m(\rho) \leq  \frac {4 \rho }{c^2}  \left( 2\epsilon +\frac{2M}{ R ^2 }\, m(\rho) \right).
$$
Thus, selecting for instance $\rho = c^2 R^2/ (16 M)$ we obtain \eqref{finalinequ} with $C = M/R^2$. 
\end{proof}

\section*{Acknowledgments}

The first author was partially supported by Simons Foundation Collaboration Grants for Mathematicians (grant MPS-TSM-00710499).
He also acknowledges the support of the project PID2021-124472NB-I00, funded by MCIN/AEI/10.13039/501100011033 and by ``ERDF A way of making Europe'', as well as the support of Junta de Andaluc\'{\i}a (research group FQM-229 and Instituto Interuniversitario Carlos I de F\'{\i}sica Te\'orica y Computacional). 

The authors also thank Brian Hall and Edward Saff for interesting discussions, as well as the anonymous referee for very useful observations.

\def\cprime{$'$}

\end{document}